\documentclass[11pt]{amsart}
\setlength{\textwidth}{\paperwidth}
\addtolength{\textwidth}{-2.8in}
\calclayout

\usepackage[pagebackref = false]{hyperref}
\usepackage{amsmath,xcolor}

\newtheorem{theorem}{Theorem}[section]
\newtheorem{lemma}[theorem]{Lemma}

\theoremstyle{definition}
\newtheorem{definition}[theorem]{Definition}

\theoremstyle{remark}

\newtheorem*{remark*}{Remark}

\theoremstyle{plain}

\newtheorem{corollary}[theorem]{Corollary}
\newtheorem{proposition}[theorem]{Proposition}
\newtheorem*{proposition*}{Proposition}

\theoremstyle{definition}

\newtheorem{observation}[theorem]{Observation}
\newtheorem*{observation*}{Observation}

\numberwithin{equation}{section}
\newcommand{\pa}{\mathfrak{a}}
\newcommand{\PA}{\mathfrak{A}}
\newcommand{\pb}{\mathfrak{b}}
\newcommand{\ve}{\varepsilon}

\newcommand{\uqf}[1]{\langle #1 \rangle} 
\newcommand{\FoF}[1]{\mathrm{Quot}(#1)} 

\let\phi\varphi

\newcommand{\vect}[1]{\mathbf{#1}}
\def\N{\mathbb N}
\def\Z{\mathbb Z}
\def\R{\mathbb R}
\def\Q{\mathbb Q}
\def\C{\mathbb C}
\def\O{\mathcal O}
\def\P{\mathcal P}

\begin{document}
	
	\title{On quadratic Waring's problem in totally real number fields}
	
	
	\author{Jakub Kr\'asensk\'y}
	\address{Charles University, Faculty of Mathematics and Physics, Department of Algebra, Sokolovsk\'{a}~83, 18600 Praha 8, Czech Republic}
	\curraddr{}
	\email{krasensky@karlin.mff.cuni.cz}
	\thanks{J.K. acknowledges partial support by project PRIMUS/20/SCI/002 from Charles University, by Czech Science Foundation GA\v{C}R, grant 21-00420M, by projects UNCE/SCI/022 and GA UK No.\ 742120 from Charles University, and by SVV-2020-260589.}
	
	\author{Pavlo Yatsyna}
	\address{
 Aalto University, Department of Mathematics and Systems Analysis, P.O. Box 11100, FI-00076, Finland}
	\curraddr{}
	\email{pavlo.yatsyna@aalto.fi}
	\thanks{P.Y. was supported by the project PRIMUS/20/SCI/002 from Charles University.}
	
	\subjclass[2020]{Primary 11E12, 11D85, 11E25, 11E39}
	
	\date{}
	
	\dedicatory{}
	
	\begin{abstract}
		We improve the bound of the $g$-invariant of the ring of integers of a totally real number field, where the $g$-invariant $g(r)$ is the smallest number of squares of linear forms in $r$ variables that is required to represent all the quadratic forms of rank $r$ that are representable by the sum of squares. Specifically, we prove that the $g_{\O_K}(r)$ of the ring of integers $\O_K$ of a totally real number field $K$ is at most $g_{\Z}([K:\Q]r)$. Moreover, it can also be bounded by $g_{\O_F}([K:F]r+1)$ for any subfield $F$ of $K$. This yields a sub-exponential upper bound for $g(r)$ of each ring of integers (even if the class number is not $1$). Further, we obtain a more general inequality for the lattice version $G(r)$ of the invariant and apply it to determine the value of $G(2)$ for all but one real quadratic field. 
	\end{abstract}
	
	\maketitle
	
	\section{Introduction}
	
	The \emph{quadratic Waring's problem} or a \emph{new Waring's problem with squares of linear forms} (eponymous with the title of Mordell's paper \cite{Mo30} that initiated the problem around 1930) asks what is the smallest number of squares needed to represent all admissible quadratic forms. For positive semidefinite quadratic forms in $r$ variables over $\Z$, where $r\le 5$, Mordell and Ko \cite{Ko, Mo30, Mo32} proved that $r+3$ squares of linear forms suffice. Observe that quadratic forms that are sums of squares of linear forms are necessary positive semidefinite. However, there exists a (unique) positive definite quadratic form in $6$ variables which is not a sum of squares \cite{Mo37}.
	
	Let $\Sigma_{\Z}(r)$ be the set of quadratic forms in $r$ variables representable by a sum of squares of linear forms over $\Z$. The $\emph{g-invariant}$ $g_{\Z}(r)$ is the smallest natural number such that every form in $\Sigma_{\Z}(r)$ is, in fact, a sum of $g_{\Z}(r)$ squares. The above results now read as $g_{\Z}(r)=r+3$, for $1\leq r \leq 5$. This is a generalisation of Lagrange's four-square theorem, i.e.\ that $g_{\Z}(1)=4$.
	
	The only remaining known value, $g_{\Z}(6)=10$, was determined just in 1997 by Kim and Oh \cite{KO97}. Much effort went into obtaining upper and lower bounds for $g_{\Z}(r)$. For $7 \leq r \leq 20$, explicit bounds are known \cite{KO02, Sa00}. The upper bounds valid for all $r$ improved gradually: from functions growing faster than exponentially \cite{Ic2} through an exponential \cite{KO05} to the currently best $g_{\Z}(r) = O(\mathrm{e}^{(4+2\sqrt2+\ve)\sqrt{r}})$ due to Beli, Chan, Icaza and Liu \cite{BCIL}.
	
	The $g$-invariant can be generalised to $g_R(\,\cdot\,)$ of an arbitrary ring $R$ by 
	replacing forms over $\Z$ by forms over $R$. In particular, the value $\P(R)=g_R(1)$ is the \emph{Pythagoras number} of $R$, much examined both for arbitrary fields (see, for example, \cite{Le}) and for orders of number fields \cite{HH, Kr, KRS, Pe, Ti, Sch}, and, in an influential paper \cite{CDLR}, for affine and local algebras.
	
	For any number field $K$ and $r\ge 3$, we have $g_K(r)=r+3$ \cite[Prop.\ ~3.2]{BLOP}. The exact values of $g_{\Z}(r)$ for $1\le r\le 5$, given above, derive as a straightforward consequence of the fact that every positive definite form of rank $r$ is represented by a form that is in the genus of $I_{r+3}$ (the sum of $r+3$ squares form): Up to rank eight, this genus contains only one equivalence class \cite{Kn}. 
	
	For orders $\O$ in number fields which are not totally real, one has $\P(\O) \leq 5$ \cite{Pf} and for maximal orders even $\P(\O_K) \leq 4$ and more generally $g_{\O_K}(r) \leq r+3$ \cite[Sec.\ ~1]{Ic2} thanks to the theory of spinor genera, but the totally real case exhibits radically different behaviour. For totally real number field $K$, $\P(\O_K)$ can be arbitrarily large \cite{Sch}, but it is bounded by a function depending only on the degree $d=[K:\Q]$ \cite{KY}, namely $\P(\O) \leq g_{\Z}(d)$ for any order $\O$ of degree $d$ (the related bound $\P(K)\le g_F([K:F])$ for fields has already appeared in \cite{CDLR}).
	
	The fact that $g_{\O_K}(r)$ is finite was given together with an upper bound in \cite{Ic1, Ic2}. Chan and Icaza have shown that $g_{\O_K}(r) \leq D \mathrm{e}^{\kappa\sqrt{r}}$ for totally real number fields $K$ of class number $1$, where constants $\kappa$ and $D$ depend only on $K$ \cite[Thm.\ ~1.1]{CI}. We improved this bound as follows:
	\begin{theorem} \label{th:g-overZ}
		Let $\O$ be an order in a number field $K$ of degree $d$. Then
		\[
		g_{\O}(r) \leq g_{\Z}(rd).
		\]
		In particular, for every $\ve>0$, there exists a constant $D$, depending only on $\ve$, such that
		\[
		g_{\O}(r) \leq D \mathrm{e}^{(4+2\sqrt2 + \ve)\sqrt{dr}}.
		\]
	\end{theorem}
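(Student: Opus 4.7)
The plan is to reduce to $\Z$ via restriction of scalars. For $K$ not totally real, the stronger bound $g_\O(r) \leq r+3 \leq g_\Z(rd)$ already follows from earlier work cited in the introduction, so one may assume $K$ is totally real. Fix a $\Z$-basis $\omega_1,\ldots,\omega_d$ of $\O$, identifying $\O^r$ with $\Z^{rd}$ as a $\Z$-module. For any quadratic $\O$-form $Q$ of rank $r$, let $\tilde{Q}(\vec{x}) := \mathrm{Tr}_{K/\Q}(Q(\vec{x}))$ be the associated $\Z$-form of rank $rd$.

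First I would show the implication $Q \in \Sigma_\O(r) \Rightarrow \tilde{Q} \in \Sigma_\Z(rd)$. If $Q = \sum_{i=1}^N L_i^2$ for $\O$-linear forms $L_i\colon \O^r \to \O$, then $\tilde{Q} = \sum_{i} T \circ L_i$, where $T(\alpha) := \mathrm{Tr}_{K/\Q}(\alpha^2)$ is the trace $\Z$-form on $\O$ of rank $d$. A key auxiliary fact is that $T$ itself lies in $\Sigma_\Z(d)$; this one verifies directly from the Gram matrix $(\mathrm{Tr}(\omega_j\omega_k))_{j,k}$, whose entries are integers and whose positive-definiteness follows from $K$ being totally real. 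Composing any sum-of-squares decomposition of $T$ with each $L_i$ exhibits $\tilde{Q}$ as a sum of $\Z$-linear squares, whence by the definition of $g_\Z(rd)$ one may write $\tilde{Q} = \sum_{k=1}^M \ell_k^2$ with $\ell_k\colon \O^r \to \Z$ and $M \leq g_\Z(rd)$.

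The main obstacle is then to lift this $\Z$-decomposition to an $\O$-decomposition of $Q$: one must produce $\O$-linear forms $\Lambda_1,\ldots,\Lambda_M\colon \O^r \to \O$ with $Q = \sum_k \Lambda_k^2$. Since trace duality identifies $\mathrm{Hom}_\Z(\O,\Z)$ with the inverse different $\mathfrak{d}^{-1} \supsetneq \O$, the $\ell_k$ cannot in general be promoted naively to $\O$-valued linear forms (one sees this already in small examples such as $\alpha = 4+2\sqrt{2} \in \Z[\sqrt{2}]$, where a four-square decomposition of $\tilde{Q}$ yields summands in $\mathfrak{d}^{-1}$ rather than in $\O$). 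To overcome this, I would either choose the $\Z$-decomposition compatibly with the $\O$-action (exploiting the considerable freedom in the representation), or invoke a lattice-theoretic argument showing that the existence of an isometric $\Z$-embedding $(\O^r,\tilde{Q}) \hookrightarrow I_M(\Z)$ forces an isometric $\O$-embedding $(\O^r,Q) \hookrightarrow I_M(\O)$. The sub-exponential estimate in the ``in particular'' clause is then immediate upon substituting the bound $g_\Z(n) = O(\exp((4+2\sqrt{2}+\ve)\sqrt{n}))$ of Beli--Chan--Icaza--Liu with $n = rd$.
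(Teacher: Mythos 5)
Your reduction via the trace form has two genuine gaps, one at each end of the argument. First, the ``key auxiliary fact'' that $T(\alpha)=\mathrm{Tr}_{K/\Q}(\alpha^2)$ lies in $\Sigma_{\Z}(d)$ does not follow from inspecting the Gram matrix $(\mathrm{Tr}(\omega_j\omega_k))_{j,k}$: positive definiteness together with integrality of the entries is \emph{not} sufficient for a form to be a sum of squares of integral linear forms once the rank reaches $6$ (this is exactly Mordell's senary counterexample cited in the introduction; likewise $E_8$ is positive definite and integral but embeds in no $I_N$). The identity $T(\alpha)=\sum_\sigma\sigma(\alpha)^2$ only exhibits $T$ as a sum of squares of linear forms with \emph{algebraic} coefficients. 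For $d\le 5$ one could invoke Mordell--Ko, but for general $d$ this step is unproven, and since $\tilde{Q}=T$ when $Q=X^2$, your whole first implication stands or falls with it. Second, and more seriously, the lifting step you yourself flag as ``the main obstacle'' is the entire content of the theorem and is left unresolved. The suggested dichotomy is not an argument: an isometric $\Z$-embedding of $(\O^r,\tilde{Q})$ into $I_M(\Z)$ retains no memory of the $\O$-module structure, so there is no reason it should force an $\O$-embedding of $(\O^r,Q)$ into $I_M(\O)$, and ``choosing the decomposition compatibly with the $\O$-action'' is precisely what one would need to prove.

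The paper's proof (Theorem \ref{th:main_general}\,(1), applied with $S=\Z$) avoids both issues by descending via \emph{coordinates} rather than via the trace, and by ascending via \emph{base change} rather than by lifting. Given $Q=\sum_{i\le N}L_i^2$, i.e.\ a representation $\iota\colon L\to I_N$ over $\O$, one writes each $\iota(\vect{x}_j)\in\O^N$ as $\sum_i\vect{f}_i^{(j)}\beta_i$ with $\vect{f}_i^{(j)}\in\Z^N$ and considers the $\Z$-module $M$ spanned by the $rd$ vectors $\vect{f}_i^{(j)}$, equipped with the \emph{restriction} of the sum-of-squares form on $\Z^N$ --- so $M$ is by construction a submodule of $I_N$ over $\Z$, and (after passing to a free rank-$rd$ lattice $\widetilde{M}$ surjecting onto it) is represented by $I_{g_{\Z}(rd)}$ by the very definition of $g_{\Z}(rd)$; no auxiliary fact about trace forms is needed. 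Tensoring the representation $\widetilde{M}\to I_{g_{\Z}(rd)}$ with $\O$ automatically yields an $\O$-representation $\O\otimes_{\Z}\widetilde{M}\to I_{g_{\Z}(rd)}$, and $L$ maps into $\O\otimes_{\Z}\widetilde{M}$ via $\vect{x}_j\mapsto\sum_i\beta_i\vect{e}_{ij}$, so the ``lift'' is free of charge. If you want to salvage your approach you would have to replace the trace construction by this coordinate construction; as written, the proposal does not constitute a proof.
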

	The first inequality directly follows from Theorem \ref{th:main} (1) below. 
	The latter is a consequence of \cite[Thm.\ ~1.1]{BCIL}, that is, the bound $g_{\Z}(r) = O(\mathrm{e}^{(4+2\sqrt2+\ve)\sqrt{r}})$.
	
	
	
	\smallskip
	
	For number fields with a class number larger than $1$, the $g$-invariant is still well-defined. However, there is a natural alternative: Let $G_{\O_K}(r)$ be obtained by replacing quadratic forms with quadratic lattices and suitably rephrasing the condition about a sum of squares of linear forms. 
	(The precise formulation is in Definition \ref{de:G}; note that the analogous definition for Hermitian lattices is used in \cite{BCIL, Li1, Li2}.) A more encompassing result of this paper is the following:
	\begin{theorem}\label{th:main}
		Let $K \supset F$ be number fields, $[K:F]=d$, and $\O$ any order in $K$ which contains $\O_F$.
		\begin{enumerate}
			\item If $\O$ is a free $\O_F$-module, then $g_{\O}(r) \leq g_{\O_F}(rd)$. In general, $g_{\O}(r) \leq g_{\O_F}(rd+1)$.
			\item $G_{\O_K}(r)\leq G_{\O_F}(rd)$. In particular, $\P(\O_K) \leq G_{\O_K}(1) \leq G_{\O_F}(d)$.
			\item $g_{\O}(r)\leq G_{\O_F}(rd)$. In particular, $\P(\O) \leq G_{\O_F}(d)$.
		\end{enumerate}
	\end{theorem}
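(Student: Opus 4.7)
All three parts will be proven by a single restriction-of-scalars construction; I describe it first for the free case of (1), which carries the main idea.

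Take $Q \in \Sigma_\O(r)$ and a representation $Q = \sum_{k=1}^n L_k^2$ with $L_k(\vec x) = \sum_i \lambda_{ki} x_i$ and $\lambda_{ki} \in \O$. Fix an $\O_F$-basis $\omega_1, \dots, \omega_d$ of $\O$ and expand $\lambda_{ki} = \sum_j b_{kij}\omega_j$ with $b_{kij} \in \O_F$. To these data I would attach the auxiliary $\O_F$-quadratic form in $rd$ variables
\[
\tilde M(y) \;=\; \sum_{k=1}^n \Bigl(\sum_{i,j} b_{kij}\, y_{ij}\Bigr)^2,
\]
which lies in $\Sigma_{\O_F}(rd)$ by construction. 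The defining property of $g_{\O_F}(rd)$ then supplies a shorter decomposition $\tilde M = \sum_{k'=1}^{g_{\O_F}(rd)} (\sum_{i,j} b'_{k'ij}\, y_{ij})^2$. The heart of the argument is the identity
\[
\sum_k L_k(\vec x)^2 \;=\; \sum_{i,i',j,j'} N_{(i,j),(i',j')}\, \omega_j\omega_{j'}\, x_i x_{i'},
\]
where $N$ is the Gram matrix of $\tilde M$, i.e.\ $N_{(i,j),(i',j')} = \sum_k b_{kij} b_{ki'j'}$. Since $N$ is invariant under re-decomposing $\tilde M$, setting $\lambda'_{k'i} = \sum_j b'_{k'ij}\omega_j \in \O$ and $L'_{k'} = \sum_i \lambda'_{k'i} x_i$, the same identity applied to the primed data gives $Q = \sum_{k'}(L'_{k'})^2$ with at most $g_{\O_F}(rd)$ summands.

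For the non-free case of (1), the module $\O^r$ is a rank-$rd$ projective $\O_F$-module which can still be generated by $rd+1$ elements over the Dedekind domain $\O_F$; the same construction run on such a generating set in place of a basis delivers the bound $g_{\O_F}(rd+1)$. Parts (2) and (3) proceed by the same mechanism, but the auxiliary object on the $\O_F$-side is now a rank-$rd$ $\O_F$-\emph{lattice} (the projective $\O_F$-module underlying $L$ or $\O^r$, equipped with the $\O_F$-quadratic form built from the $b_{kij}$ or from the components $\iota^{(j)}_k$ of the given $\O_K$-linear embedding). Since the $G$-invariant accepts arbitrary rank-$rd$ lattices, no correction is needed to handle non-freeness, and the bound $G_{\O_F}(rd)$ follows from the same lift-back procedure.

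The main difficulty lies precisely in the lift back from the auxiliary $\O_F$-side to the original problem: in the free case it is immediate from the Gram-matrix identity above, but in the non-free case and for parts (2) and (3) one must arrange the generators so that the auxiliary lattice carries sufficient information about the multiplication in $\O$, and ensure that the $\O_F$-linear re-decomposition produced by $G_{\O_F}(rd)$ re-assembles through the basis relations into an $\O_K$- (resp.\ $\O$-) linear isometric embedding.
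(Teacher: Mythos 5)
Your treatment of part (1) in the free case is correct and is essentially the paper's own argument (the proof of Theorem \ref{th:main_general}~(1)) rewritten in polynomial/Gram-matrix language: your auxiliary form $\tilde M$ is exactly the paper's free lattice $\widetilde M$, and the observation that the Gram matrix $N$ is preserved under re-decomposition is the polynomial counterpart of composing $\widetilde M \to I_g$ with extension of scalars back to $\O$. Your route to the second half of (1), namely $g_{\O}(r)\le g_{\O_F}(rd+1)$, is genuinely different from the paper's and appears to work: expanding the whole coefficient vector $(\lambda_{k1},\dots,\lambda_{kr})\in\O^r$ over a generating set of $\O^r$ of size $rd+1$ (which exists since $\O^r$ is torsion-free of rank $rd$ over the Dedekind domain $\O_F$, hence isomorphic to $\O_F^{rd-1}\oplus\pa$ with $\pa$ two-generated) and running the same Gram-matrix substitution gives the bound directly. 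The paper instead obtains this as Corollary \ref{co:rd+1}, via part (3) combined with $G_{\O_F}(rd)\le g_{\O_F}(rd+1)$ from Proposition \ref{pr:G<g}; your argument shows the detour through the lattice invariant is avoidable for this particular inequality, which is worth writing out in full.

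For parts (2) and (3), however, there is a genuine gap, and you have located it yourself: you state that ``the main difficulty lies precisely in the lift back'' and that one must ``arrange the generators'' and ``ensure'' that the re-decomposition re-assembles over $\O_K$ --- but that arranging and ensuring is the actual content of the proof, and the proposal does not carry it out. Two concrete steps are missing. First, when $\O_K$ is not free over $\O_F$ and $L=\O_K\vect{x}_1+\cdots+\O_K\vect{x}_{r-1}+\PA^{-1}\vect{x}_r$ is not free over $\O_K$, the coordinates of the $\vect{x}_j$ with respect to a pseudo-basis $\O_K=\O_F\beta_1+\cdots+\O_F\beta_{d-1}+\pa^{-1}\beta_d$ (and of $\vect{x}_r$ with respect to a pseudo-basis of $\PA$) lie only in $(\pa^{-1}\O_F)^N$ or $(\pb^{-1}\O_F)^N$, not in $\O_F^N$; one must rescale the offending generators by the ideals $\pa$, $\pb$ to obtain an honest sublattice $M\subseteq I_N$ over $\O_F$ of rank at most $rd$, to which $G_{\O_F}(rd)$ applies via Lemma \ref{le:altdef}. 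Second, one must check that after this rescaling one still has $L\subseteq \O_K\otimes_{\O_F}M$ (this uses $\beta_d\in\O_K\pa$, $\PA^{-1}\gamma_i\subset\O_K$ and $\PA^{-1}\gamma_d\subset\O_K\pb$), so that $M\to I_G$ over $\O_F$ yields $L\to \O_K\otimes_{\O_F}M\to I_G$ over $\O_K$. Neither step follows from the remark that ``the $G$-invariant accepts arbitrary rank-$rd$ lattices''; also note that the correct auxiliary object is the $\O_F$-span of the rescaled coordinate vectors inside $I_N$, not ``the projective $\O_F$-module underlying $L$'' with a form built from the coefficients. As written, the proposal proves (1) but only outlines (2) and (3).
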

	The proof is a direct consequence of the general Theorem \ref{th:main_general} (the only exception is the second part of (1), which is precisely Corollary \ref{co:rd+1}). In fact, we show that $g_R(r) \leq g_S(dr)$ for a ring extension $R/S$ generated as an $S$-module by $d$ elements. 
	
	Let us compare the statements: If $\O=\O_K$, (3) follows from (2), whereas (1) and (2) are independent. For non-maximal orders, (2) cannot be applied, and (1) and (3) are independent. Finally, the simplest situation when (1) can be applied is if $F$ has the class number $1$; in that case $G_{\O_F}=g_{\O_F}$, so (3) gives exactly the same.
	
	
	
	
	Let us point out that, aside from providing a first inequality of this type for the $g$-invariants where $r>1$, our Theorem \ref{th:main} also provides a new upper bound for the Pythagoras number in the case when $\O_K$ is not a free $\O_F$-module, thus essentially resolving a question posed in a remark after \cite[Prop.\ ~7.5]{KRS}. This is also evidence that the lattice version of the invariant is important. Further evidence towards its significance is that this invariant is necessary in the proof of $g_{\O}(r) \leq g_{\O_F}(rd+1)$, see Corollary \ref{co:rd+1}. Also, note that in Theorem \ref{th:g-overZ}, if $\O=\O_K$, then $g_{\O_K}(r)$ can be replaced by $G_{\O_K}(r)$. For that, one applies Theorem \ref{th:main} (2) instead of (1). 
	
	Some explicit upper bounds for $g_{\O_F}(r)$ for $r>1$ and a totally real field $F \neq \Q$ were given by Sasaki \cite{SaJapan}. For $F=\Q(\!\sqrt5)$ he proved $g_{\O_F}(2)=5$ (see also \cite[Thm.\ ~7.7]{KRS}), and also showed $g_{\O_{F}}(3) \leq 70$, $g_{\O_{F}}(4) \leq 776$ and $g_{\O_{F}}(5) \leq 3080$. Our results improve these latter bounds to $10$, $37$ and $68$, respectively, since $g_{\Z}(6)=10$, while $g_{\Z}(8)\leq 37$ and $g_{\Z}(10)\leq 68$ \cite{KO02}.
	
	Another immediate consequence of Theorem \ref{th:main} is that $g_{\O}(r)$ is always finite. 
	
	\begin{corollary} \label{co:finite}
		All $g$-invariants of an order in a number field are finite.
	\end{corollary}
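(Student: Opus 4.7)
The plan is to deduce the corollary directly from Theorem \ref{th:main} (1) applied with the base field $F=\Q$. Given any order $\O$ in a number field $K$ of degree $d=[K:\Q]$, Theorem \ref{th:main} (1) supplies the bound $g_\O(r) \leq g_\Z(rd+1)$, which holds without any hypothesis on the structure of $\O$ as a $\Z$-module.

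From here the argument is essentially a citation. The first step is to recognise that $g_\Z(n)$ is finite for every positive integer $n$; this is a classical result recalled in the introduction, originally due to Mordell and Ko in low rank, extended to all $r$ by Icaza, and with the currently best quantitative form $g_\Z(r)=O(\mathrm{e}^{(4+2\sqrt{2}+\ve)\sqrt{r}})$ from \cite{BCIL}. Applying this with $n=rd+1$ yields finiteness of $g_\O(r)$ for every $r$.

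As a minor refinement, one can observe that any order in a number field is automatically a finitely generated torsion-free $\Z$-module, hence free of rank $d$ since $\Z$ is a principal ideal domain. This lets us invoke the stronger half of Theorem \ref{th:main} (1) to obtain $g_\O(r)\leq g_\Z(rd)$, matching the bound in Theorem \ref{th:g-overZ}; but this sharpening is not needed for mere finiteness.

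There is no real obstacle: all the substantive content has been absorbed into Theorem \ref{th:main}, and the only remaining input is the finiteness of the classical $g$-invariant over $\Z$. The only point deserving a brief mention is that finiteness of $g_\O(r)$ for a non-maximal order $\O$ cannot be read off directly from results about $\O_K$, but Theorem \ref{th:main} (1) covers arbitrary orders uniformly, so the corollary follows in a single line.
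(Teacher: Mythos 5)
Your proof is correct and follows essentially the same route as the paper: specialise Theorem \ref{th:main} (1) to $F=\Q$ (where every order is a free $\Z$-module, giving $g_\O(r)\le g_\Z(rd)$ as in Theorem \ref{th:g-overZ}) and then invoke the known finiteness of $g_\Z(\,\cdot\,)$ from \cite{HKK, Ic2, BCIL}. The paper treats this as an immediate consequence in exactly this way, so nothing is missing.
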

	
	Even for rings of integers, this corollary is of interest -- the original proof of finiteness of $g_{\O_K}(r)$ for every $r$ is due to Icaza \cite{Ic1, Ic2} and depends on the results from \cite{HKK} about $\O_K$-lattices, while our approach only relies on \cite{HKK} for the finiteness of $g_{\Z}(\,\cdot\,)$.
	
	It seems that Theorem \ref{th:main} provides optimal upper bounds for most of the cases that we can check. In the case of Pythagoras number ($r=1$), this was illustrated for quadratic fields \cite{Pe}, simplest cubic fields \cite{Ti} and biquadratic fields \cite{KRS}. In this paper, we show that for all but three quadratic fields $K$, the inequality $G_{\O_K}(2) \leq g_{\Z}(4) = 7$ is, in fact, equality.
	
	\begin{theorem}\label{th:quadratic}
		Let $K$ be a real quadratic field other than $\Q(\!\sqrt2)$, $\Q(\!\sqrt3)$, $\Q(\!\sqrt5)$. Then $G_{\O_K}(2) = 7$.
	\end{theorem}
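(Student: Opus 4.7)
The upper bound is immediate: apply Theorem~\ref{th:main}(2) with $F=\Q$ and $d=[K:\Q]=2$ to get $G_{\O_K}(2)\le G_{\Z}(4)=g_{\Z}(4)$, and $g_{\Z}(4)=7$ is the classical result of Mordell. So everything is in the matching lower bound $G_{\O_K}(2)\ge 7$: for each real quadratic $K$ outside the three exceptions I must exhibit a rank-two $\O_K$-lattice $L$ which embeds in $I_N$ for some $N$ but not in $I_6$.

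\textbf{Step 1 (a hard scalar).} Using the known computation of Pythagoras numbers of real quadratic orders (e.g.\ \cite{Pe}), I pick a totally positive $\alpha\in\O_K$ that is a sum of squares in $\O_K$ but requires at least five squares; this is available precisely because $\P(\O_K)=5$ for every real quadratic $K$ outside the three exceptional fields.

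\textbf{Step 2 (the test lattice).} I assemble $\alpha$ into a binary $\O_K$-lattice $L$; the first candidate is $\langle\alpha\rangle\perp\langle\alpha\rangle$ with Gram matrix $\alpha\cdot I_2$, which embeds in $I_{10}$ by taking two disjoint five-square representations of $\alpha$. If this choice does not suffice for certain $K$, I replace $L$ by a non-principal ideal lattice having the same rational form but a different genus, using the full strength of the lattice invariant $G$ as opposed to $g$.

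\textbf{Step 3 (non-embedding).} Assume towards a contradiction that $L\hookrightarrow I_6$ as an $\O_K$-lattice. Then there are $v,w\in\O_K^6$ with $v\cdot v=w\cdot w=\alpha$ and $v\cdot w=0$; by Step~1 each of $v,w$ has at least five nonzero coordinates, so at most one vanishing entry. A case analysis on the joint support of $(v,w)$ combined with the orthogonality relation $v\cdot w=0$ forces, in each case, either a representation of $\alpha$ by four squares (contradicting Step~1) or a rank-two representation of $\langle\alpha\rangle\perp\langle\alpha\rangle$ inside $I_5$, which can be ruled out by the same short-vector argument iterated.

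\textbf{Main obstacle.} The delicate part is Step~3: while the support analysis is finite, closing it \emph{uniformly} in $K$ with the simplest $L$ is not obvious, and I expect that a handful of small discriminants may require a refined $L$ (in particular a non-free ideal lattice, which is why the proposition is stated for $G$ and not $g$) or an auxiliary local argument at the dyadic prime. The three exceptional fields must, and do, fail this scheme: $\Q(\!\sqrt2)$ and $\Q(\!\sqrt3)$ fall at Step~1 (their Pythagoras numbers are $<5$), whereas $\Q(\!\sqrt5)$ passes Step~1 but fails Step~3 in view of Sasaki's equality $g_{\O_{\Q(\sqrt 5)}}(2)=5$.
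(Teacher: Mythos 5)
Your upper bound is exactly the paper's (Lemma \ref{le:23mod4}\,(1)), but the lower bound has a genuine gap, and I do not believe Step~3 can be repaired in the form you propose. The dichotomy you assert --- that two orthogonal vectors $v,w\in I_6$ of norm $\alpha$ must yield either a four-square representation of $\alpha$ or an embedding of $\uqf{\alpha}\perp\uqf{\alpha}$ into $I_5$ --- is unfounded: orthogonality only says $\sum_{i}v_iw_i=0$ over the common support, and since these products need not be totally positive there is no contradiction to extract from support counting. A sanity check over $\Z$ shows the mechanism fails: $7$ has length $4$ in $\Z$, yet $v=(2,1,1,1,0)$ and $w=(0,1,1,-2,1)$ are orthogonal vectors of norm $7$ in $I_5$, so $\uqf{7}\perp\uqf{7}\to I_5$ despite both vectors having nearly full support. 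Quantitatively your seed is also too weak: starting from $\ell(\alpha)=5$ you could at best hope to obstruct $I_5$ or $I_6$ by a miracle, whereas the paper's lower bound needs an obstruction to $I_6$ for \emph{every} admissible binary lattice, and it obtains one by importing genuinely ``length seven'' data. Finally, Step~1 is factually wrong for $\Q(\!\sqrt6)$ and $\Q(\!\sqrt7)$: there $\P(\O_K)=4$, not $5$ (see the theorem on $G_{\O_F}(1)$ in the introduction), so your scheme does not even start for those two fields, which the statement nevertheless covers.

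The paper's route is quite different and worth internalising. For $n\not\equiv1\pmod4$, $n\ge10$, it applies Theorem \ref{th:main} ``in the other direction'': choosing a biquadratic field $K'=\Q(\!\sqrt n,\sqrt{n_2})$ with $\P(\O_{K'})=7$ (known from \cite{KRS}), the degree-two extension $K'/F$ gives $7=\P(\O_{K'})\le G_{\O_F}(2)$; an element of length $7$ upstairs becomes a binary lattice downstairs that needs $7$ squares. For $n\equiv1\pmod4$, $n\ge17$, Proposition \ref{pr:1mod4} writes down an explicit binary form $\phi$ over $\O_F$ and proves $\ell(\phi)=7$ by a careful coefficient comparison (uniqueness of the decomposition of the $Y^2$-coefficient as a sum of squares, then a Diophantine analysis of $\sum a_i^2+n\sum b_i^2$ and $\sum a_ib_i$), with the small $n$ handled by computer search in auxiliary biquadratic fields; the cases $n=6,7,13$ are separate explicit computations (Lemma \ref{le:biquadratic}). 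If you want to salvage your approach, the realistic fix is to replace your $\alpha$ of length $5$ by a binary lattice certified to require $7$ squares --- which is precisely what the paper's two mechanisms produce.
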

	
	For $\Q(\!\sqrt5)$, the correct value is $5$, as shown by Sasaki \cite{SaJapan}. The same was recently proven for $\Q(\!\sqrt2)$ by He and Hu \cite{HH}; both papers are based on the local-global principle for sums of four integral squares. For the only remaining field $\Q(\!\sqrt3)$ we expect $G(2)=6$, but it can be very difficult to prove, since already for the form $x^2+y^2+z^2$ it is not clear whether the local-global principle holds (for representation of integral binary forms). 
	
	The proof of Theorem \ref{th:quadratic} is contained in Section \ref{se:quadratic}. The upper bound is already clear. For the lower bound in cases $\Q(\!\sqrt{n})$ with $n \equiv 1 \pmod4$, we explicitly produce a quadratic form which is not a sum of $6$ squares of linear forms (Proposition \ref{pr:1mod4}), in fact proving the stronger result $g_{\O_K}(2)=7$. If $n \not\equiv 1 \pmod4$, we use the inequality between $G$-invariants in the other direction, exploiting the results on the Pythagoras numbers of orders in biquadratic number fields from \cite{KRS}.
	
	From the definition, it is clear that $g_{\O_K}(r) \leq G_{\O_K}(r)$, and in Proposition \ref{pr:G<g}, we shall see $G_{\O_K}(r) \leq g_{\O_K}(r+1)$. It is natural to ask: Can $g_{\O_K}(r) < G_{\O_K}(r)$ happen for some number field $K$ and some $r \in \N$, or does equality always hold? Guessing the answer is difficult since the exact values of the invariants are rarely known. However, we fully solve it at least for the ``lattice Pythagoras number'' $G_{\O_F}(1)$ of a quadratic ring of integers:
	
	\begin{theorem}
		Let $F$ be a real quadratic field. Then $\P(\O_F)=G_{\O_F}(1)$. This value is five except for $F = \Q(\!\sqrt2)$, $\Q(\!\sqrt3)$ and $\Q(\!\sqrt5)$, where it is three, and for $F = \Q(\!\sqrt6)$ and $\Q(\!\sqrt7)$, where it is four.
	\end{theorem}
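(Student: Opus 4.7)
The plan is to sandwich $G_{\O_F}(1)$ between $\P(\O_F)$ from below and $5$ from above, and then invoke the classification of Pythagoras numbers of real quadratic orders due to Peters.

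For the lower bound, $\P(\O_F) = g_{\O_F}(1) \leq G_{\O_F}(1)$ is immediate from the definitions, since a free rank-$1$ lattice $(\O_F, \alpha x^2)$ is a special case of the lattice formulation. For the upper bound, I apply Theorem~\ref{th:main}~(2) with the tower $\Q \subset F$ and $d = [F : \Q] = 2$ to obtain $G_{\O_F}(1) \leq G_{\Z}(2)$. Because $\Z$ is a PID, every $\Z$-lattice is free, so $G_{\Z}(2) = g_{\Z}(2) = 5$ by Mordell's classical computation.

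Peters' classification says that for a real quadratic field $F = \Q(\!\sqrt d)$ ($d > 1$ squarefree), $\P(\O_F) = 5$ except when $d \in \{2, 3, 5\}$ (where $\P(\O_F) = 3$) or $d \in \{6, 7\}$ (where $\P(\O_F) = 4$). In the generic case $\P(\O_F) = 5$, the sandwich $5 \leq G_{\O_F}(1) \leq 5$ collapses and forces $G_{\O_F}(1) = 5 = \P(\O_F)$.

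For the five exceptional fields, the key observation is that each of $\Q(\!\sqrt 2)$, $\Q(\!\sqrt 3)$, $\Q(\!\sqrt 5)$, $\Q(\!\sqrt 6)$, $\Q(\!\sqrt 7)$ has class number one, so $\O_F$ is a PID. Hence every rank-$1$ $\O_F$-lattice is free, and the lattice formulation of $G_{\O_F}(1)$ coincides with $g_{\O_F}(1) = \P(\O_F)$, yielding the announced values. No serious obstacle is present: the argument is a short corollary of Theorem~\ref{th:main}~(2) combined with three external facts (Peters' Pythagoras numbers, Mordell's $g_{\Z}(2) = 5$, and the class-number-one property of the five exceptional quadratic fields). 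The one point worth a sanity check is that Theorem~\ref{th:main}~(2) does not implicitly require the subfield to be totally real in a way that would exclude $\Q$, which inspection of its statement confirms.
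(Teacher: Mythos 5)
Your proposal is correct and follows essentially the same route as the paper: the sandwich $5=\P(\O_F)\leq G_{\O_F}(1)\leq g_{\Z}(2)=5$ in the generic case, and class number one (hence $G_{\O_F}(1)=g_{\O_F}(1)=\P(\O_F)$) for the five exceptional fields, combined with the known classification of quadratic Pythagoras numbers. The only cosmetic difference is that the paper credits that classification to Peters, Maa{\ss}, Dzewas, Cohn--Pall and Kneser rather than to Peters alone.
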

	\begin{proof}
		All the values $\P(\O_F)$ for a real quadratic field $F$ are known thanks to Peters, Maa\ss, Dzewas, Cohn and Pall and an unpublished result by Kneser (contained in Scharlau's dissertation) \cite{Pe, Ma, Dz, CP, Sch2}; an overview of these results can be found in Section 3 of \cite{KRS}. It remains to determine $G_{\O_F}(1)$. All the five exceptional cases have class number $1$, so there is no difference between $\P(\O_F)$ and $G_{\O_F}(1)$. For the other cases, we get $5 = \P(\O_F) \leq G_{\O_F}(1) \leq g_{\Z}(2) =5$.
	\end{proof}
	
	\section{Preliminaries}\label{se:prelim}
	We use \cite{Na} as a reference for everything related to number fields and \cite{KS} for quadratic forms and lattices.
	In order to be able to conduct all proofs in the more geometric language of quadratic lattices instead of switching back and forth between lattices and polynomials, we define quadratic lattices over any integral domain (and free lattices even over any commutative ring) instead just over a Dedekind domain.
	
	\subsection{Quadratic forms, \texorpdfstring{$g$}{g}-invariants} \label{ss:polynomials}
	
	Let $R$ be any commutative ring with unity. A \emph{quadratic} (\emph{linear}, resp.) \emph{form} over $R$ in $r$ variables is a homogeneous polynomial in $R[X_1, \ldots, X_r]$ of degree $2$ ($1$, resp.). A form in $r$ variables is also called $r$-ary: unary, binary, ternary, etc. If for quadratic forms $\phi$ and $\psi$ in $r$ and $s$ variables, $r \ge s$, it is possible to find linear forms $L_1, \ldots, L_r$ such that \[\phi\bigl(L_1(X_1, \ldots, X_s), \ldots, L_r(X_1, \ldots, X_s)\bigr) = \psi(X_1, \ldots, X_s),\] we say that $\phi$ \emph{represents} $\psi$. Two forms in the same number of variables which represent each other are called \emph{equivalent}.
	
	Consider the set $\Sigma_R(r)$ of all $r$-ary quadratic forms over $R$ which can be written as a sum of squares of linear forms over $R$, i.e.\ which are represented by the quadratic form $X_1^2 + \cdots + X_N^2$ for some $N\in \N$. Then we put
	\[
	g_R(r) = \min\{n : X_1^2 + \cdots + X_n^2 \text{ represents all forms in } \Sigma_R(r)\}.
	\]
	If no such $n$ exists, we put $g_R(r) = \infty$; however, if $R$ is the ring of integers in a number field, then $g_R(r)$ is finite for every $r$ \cite{Ic2}. (Our Corollary \ref{co:finite} extends this result to non-maximal orders.) Note that $g_R(1) = \P(R)$ is the \emph{Pythagoras number}: The smallest number $P$ such that, if $\alpha \in R$ is a sum of squares, then it can be written as a sum of at most $P$ squares.
	
	
	By \emph{length} we mean the minimal number of squares which is necessary to represent a form $\phi$ (or a number $\alpha$). The length is denoted by $\ell(\,\cdot\,)$ or $\ell_R(\,\cdot\,)$. Hence, $g_R(r)$ is the biggest finite length of an $r$-ary form over $R$.
	
	\subsection{Quadratic spaces and modules}
	Let $R$ be a commutative ring with unity. A \emph{quadratic module} over $R$ is a pair $(M,Q)$, where $M$ is an $R$-module and $Q: M \to R$ is a \emph{quadratic map}, i.e.\ a map such that $Q(a\vect{x})=a^2Q(\vect{x})$ for every $a\in R$ and $\vect{x}\in M$, and that the induced map $B_Q(\vect{x},\vect{y})=Q(\vect{x}+\vect{y})-Q(\vect{x})-Q(\vect{y})$ is bilinear. If $R$ is a field, then $M$ is a vector space and $(M,Q)$ is called a \emph{quadratic space}.
	
	Consider two quadratic modules $(M,Q_M)$ and $(N,Q_N)$ over $R$. An \emph{isometry} is an injective $R$-linear map $\iota: M\to N$ which respects the quadratic maps, i.e.\ $Q_N(\iota(\vect{x}))=Q_M(\vect{x})$ for every $\vect{x}\in M$. If this map is bijective, then the corresponding quadratic modules are \emph{isometric}; this is an equivalence relation denoted by $\simeq$. By omitting the condition of injectivity, we get the notion of \emph{representation}: $M$ is represented by $N$ if there exists any $R$-linear map $\iota: M \to N$ such that $Q_N(\iota(\vect{x}))=Q_M(\vect{x})$ for $\vect{x} \in M$. 
	This is denoted by $M \to N$ (and by $M \not\rightarrow N$ if such an $R$-linear map for $M$ to $N$ does not exist, i.e. $N$ does not represent $M$).
	
	The \emph{orthogonal sum} $M \perp N$ of the quadratic modules $M$, $N$ is the direct sum of $R$-modules $M \oplus N$ equipped with the quadratic map $Q(\vect{x} + \vect{y}) = Q_M(\vect{x}) + Q_N(\vect{y})$ for $\vect{x}\in M, \vect{y}\in N$.

	\subsection{Free lattices}
	If $L$ is a finitely generated free module of rank $r$ (i.e.\ isomorphic to $R^r$ as an $R$-module) equipped with any quadratic map $Q$, we call $(L,Q)$ a \emph{free quadratic lattice} of \emph{rank} $r$. A lattice of rank $r$ is also called $r$-ary: unary, binary, etc. The free unary lattice $R\vect{e}$ where $Q(\vect{e})=a$ is denoted by $\uqf{a}$; it is unique up to isometry. The basic quadratic module defined over any commutative ring $R$ is $I_n$, which is the free lattice $R^n$ equipped with the ``sum-of-squares'' form $Q(\vect{x}) = \vect{x}^{\mathrm{T}}\vect{x}$. One can also write $I_n \simeq \underbrace{\uqf{1} \perp \cdots \perp \uqf{1}}_{\text{$n$-times}}$.
	
	For a quadratic form $\phi(X_1, \ldots, X_r)$ over $R$, one can construct the corresponding free quadratic lattice $L_{\phi}$ as the $R$-module $R^r$ (with standard basis vectors $\vect{e_1}, \ldots, \vect{e}_r$) equipped with the quadratic map $Q$ defined as $Q(\alpha_1\vect{e}_1 + \cdots + \alpha_r\vect{e}_r) = \phi(\alpha_1, \ldots, \alpha_r)$. On the other hand, if a free quadratic lattice (or in fact any quadratic module) is generated by $\vect{x}_1, \ldots, \vect{x}_k$ as an $R$-module, then they can be used to define a quadratic form $\phi(X_1, \ldots, X_k) = Q(X_1\vect{x}_1 + \cdots + X_k\vect{x}_k)$. In particular, there is a bijection between quadratic forms over $R$ in $r$ variables (up to equivalence) and \emph{free} quadratic lattices of rank $r$ (up to isometry). Also, note that $\phi$ represents $\psi$ if and only if the corresponding lattice $L_\phi$ represents $L_{\psi}$. Especially, $\phi$ can be written as a sum of $n$ squares of linear forms if and only if $L_{\phi}$ is represented by $I_n$.

	\subsection{Quadratic lattices} \label{ss:lattices}
	Assume now that $R$ is an integral domain with the quotient field $F$. We will define quadratic lattices over $R$ as a particularly well-behaved class of quadratic modules. Note that if $R$ is not an integral domain, we only have the notion of a \emph{free} quadratic lattice. Also, for Dedekind domains we will use the much nicer description of lattices given in the next subsection.
	
	Consider a finite-dimensional vector space $V$ over $F$. An \emph{$R$-lattice} in $V$ is any $R$-submodule $L \subset V$ which satisfies $L \subset R\vect{v}_1 + \cdots + R\vect{v}_r$ for some basis $(\vect{v}_1, \ldots, \vect{v}_r)$ of $V$. The \emph{rank} of $L$ is the dimension of the vector space $FL$. If $(V,Q)$ is a finite-dimensional quadratic space over $F$, then a \emph{quadratic lattice} is $(L, Q')$ where $L$ is an $R$-lattice in $V$ and the quadratic map $Q'$ is the restriction of $Q$ to $L$.
	
	Often we do not make a distinction between a quadratic lattice and its underlying $R$-lattice; e.g.\ the \emph{rank} of a quadratic lattice is simply the rank of the corresponding $R$-module. From now on, a lattice usually means a quadratic lattice. Also, we sometimes denote the quadratic maps corresponding to different lattices by $Q$, since it is always clear what the underlying lattice is.
	
	Note that free lattices (over an integral domain) are indeed lattices in this more general sense, since they can be embedded in a quadratic space by tensoring with the quotient field $F$. On the other hand, while lattices (and thus also the lattice version of the $g$-invariants) can be defined over any integral domain, it is much easier to work with them over Dedekind domains, see the next subsection.
	
	Finally, we are ready to define the lattice version of the $g$-invariant:
	
	\begin{definition}\label{de:G}
		For any integral domain $R$, consider the family $\Sigma^{\mathrm{lat}}_R(r)$ of all quadratic lattices of rank $r$ which are represented (over $R$) by $I_N$ for some $N \in  \N$. Then we put
		\[
		G_R(r) = \min\{n : I_n \text{ represents all lattices in } \Sigma^{\mathrm{lat}}_R(r)\}.
		\]
		If no such $n$ exists, we put $G_R(r)=\infty$.
	\end{definition}
	
	To compare, it is clear that
	\[
	g_R(r) = \min\{n : I_n \text{ represents all \emph{free} lattices in } \Sigma^{\mathrm{lat}}_R(r)\};
	\]
	therefore, $g_R(r) \leq G_R(r)$. If $R$ is a PID we have an equality as all lattices are free.
	
	\smallskip
	
	The following simple lemma is useful since it allows us to work with a smaller family of lattices than the whole $\Sigma^{\mathrm{lat}}_R(r)$.
	
	\begin{lemma}\label{le:altdef}
	For a quadratic lattice $\Lambda$ over an integral domain $R$, the following are equivalent:
	 \begin{enumerate}
	     \item $\Lambda$ represents every lattice $L$ of rank $r$ such that $L \to I_N$ for some $N \in \N$.
	     \item $\Lambda$ represents every sublattice of $I_N$ of rank at most $r$ for every $N \in \N$.
	     \item $\Lambda$ represents every sublattice of $I_N$ of rank $r$ for every $N \in \N$.
	 \end{enumerate}
	Subsequently, Definition \ref{de:G} has the following formulation:
	\[G_R(r)=\min\{n : I_n \text{ represents all sublattices of $I_N$ of rank $r$ for every $N\in \N$}\}.\] 
	\end{lemma}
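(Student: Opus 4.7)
My plan is to establish the three conditions cyclically via $(1) \Rightarrow (3) \Rightarrow (2) \Rightarrow (1)$, with the bulk of the content in the last two steps, and then read off the stated reformulation of $G_R(r)$.

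The implication $(1) \Rightarrow (3)$ is immediate: a sublattice $L$ of $I_N$ of rank $r$ is, by its inclusion map, a rank-$r$ lattice satisfying $L \to I_N$, so $(1)$ directly produces a representation of $L$ by $\Lambda$.

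For $(3) \Rightarrow (2)$, I would use a padding argument. Given a sublattice $L \subseteq I_N$ of rank $r' \leq r$, form the orthogonal sum $L \perp I_{r-r'}$, which embeds naturally as a sublattice of rank exactly $r$ into $I_N \perp I_{r-r'} \simeq I_{N+r-r'}$. Then $(3)$ yields a representation of this padded lattice by $\Lambda$, and restriction to the direct summand $L$ gives the required representation of $L$.

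For $(2) \Rightarrow (1)$, take a lattice $L$ of rank $r$ together with a representation $\iota : L \to I_N$. The image $\iota(L)$ is an $R$-submodule of $I_N$, hence (being contained in the span of the standard basis) a sublattice of $I_N$ of rank at most $r$. By $(2)$ there is a representation $\jmath : \iota(L) \to \Lambda$, and the composition $\jmath \circ \iota$ is $R$-linear with
\[
Q_\Lambda\bigl(\jmath(\iota(\vect{x}))\bigr) = Q_{I_N}(\iota(\vect{x})) = Q_L(\vect{x}),
\]
so it represents $L$ in $\Lambda$.

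The main subtlety --- and the reason $(2)$ is phrased with ``rank at most $r$'' rather than ``exactly $r$'' --- is that representations in this paper are not required to be injective, so $\iota(L)$ can have strictly smaller rank than $L$ (for example when $L$ has nonzero radical). Once the three conditions are known to be equivalent, the stated reformulation of Definition \ref{de:G} is just the observation that $G_R(r)$ is, by definition, the smallest $n$ such that $\Lambda = I_n$ satisfies condition $(1)$, which by the equivalence is the same as the smallest $n$ such that $\Lambda = I_n$ satisfies condition $(3)$.
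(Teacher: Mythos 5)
Your proof is correct and follows essentially the same route as the paper: the same cyclic chain of implications, the same padding argument $L \perp I_{r-r'} \subset I_{N+r-r'}$ for $(3)\Rightarrow(2)$, and the same passage to the image $\iota(L)$ for $(2)\Rightarrow(1)$. Your added remark explaining why ``rank at most $r$'' is needed (non-injective representations can drop the rank) is a correct and worthwhile clarification, but the argument itself matches the paper's.
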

	\begin{proof}
	(3) $\implies$ (2): Let $L \subset I_N$ be of rank $s \leq r$. Then $L \perp I_{r-s} \subset I_{N+r-s}$ has rank $r$ and is therefore represented by $\Lambda$. By restricting to $L$, we obtain the required representation.
	
	(2) $\implies$ (1): Let $L$ be a lattice of rank $r$ with a representation $\iota: L \to I_N$. Then $\iota(L)$ is a sublattice of $I_N$ of rank at most $r$, and is therefore represented by $\Lambda$. Thus $L \to \iota(L) \to \Lambda$.
	
	(1) $\implies$ (3): Every sublattice of $I_N$ is represented by $I_N$ and thus by $\Lambda$.
	\end{proof}
	
		
	
	Our definitions are valid even for rings of characteristic $2$. However, in that situation the questions considered in this paper are trivial because of the following observation.
	
	\begin{observation} \label{ob:char2}
	Let $2 = 0$ in an integral domain $R$. Then $G_R(r)=g_R(r)=1$ for every $r$. (If $R$ contains zero divisors, we still have $g_R(r)=1$ while $G_R(r)$ is not defined.) This follows from the fact that in characteristic two, $I_n \to I_1$ for every $n$, so every lattice represented by $I_n$ is represented by $I_1$ as well. The representation $\iota: I_n \to I_1$ is $\iota\bigl(\sum\alpha_i\vect{e}_i\bigr) = \bigl(\sum\alpha_i\bigr)\vect{e}$ where $\vect{e_1},\ldots,\vect{e}_n$ and  $\vect{e}$ are the standard bases.
	\end{observation}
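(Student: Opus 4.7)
The key identity is the freshman's dream in characteristic $2$: whenever $2 = 0$, one has $\bigl(\sum_i \alpha_i\bigr)^2 = \sum_i \alpha_i^2$. The plan is to use this to produce an explicit representation $\iota : I_n \to I_1$ for every $n$, and then deduce both equalities by composing representations.

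First I would define $\iota : I_n \to I_1$ on the standard basis by $\vect{e}_i \mapsto \vect{e}$ and extend $R$-linearly, so that $\iota\bigl(\sum_i \alpha_i \vect{e}_i\bigr) = \bigl(\sum_i \alpha_i\bigr)\vect{e}$. I would then verify that $\iota$ is a representation of quadratic modules: for $\vect{x} = \sum_i \alpha_i \vect{e}_i$ one computes $Q_{I_1}(\iota(\vect{x})) = \bigl(\sum_i \alpha_i\bigr)^2 = \sum_i \alpha_i^2 + 2\sum_{i<j}\alpha_i\alpha_j = \sum_i \alpha_i^2 = Q_{I_n}(\vect{x})$, using $2 = 0$ to kill the cross terms. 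Note that $\iota$ is highly non-injective (e.g.\ $\iota(\vect{e}_1 + \vect{e}_2) = 0$), which is permitted because the definition of a representation in Subsection 2.2 does not require injectivity.

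With $\iota$ available, the upper bounds are immediate. For $g_R(r)$: any $\phi \in \Sigma_R(r)$ corresponds to a free lattice $L_\phi$ with some representation $L_\phi \to I_N$; composing with $\iota: I_N \to I_1$ gives $L_\phi \to I_1$, so $\phi$ is a single square. Equivalently, a decomposition $\phi = L_1^2 + \cdots + L_N^2$ rewrites as $\phi = (L_1 + \cdots + L_N)^2$ by the same identity. For $G_R(r)$ (when $R$ is an integral domain), the same composition argument shows every lattice in $\Sigma_R^{\mathrm{lat}}(r)$ is represented by $I_1$. The matching lower bounds $g_R(r), G_R(r) \geq 1$ are witnessed respectively by the form $X_1^2$ (viewed as an $r$-ary form) and by the lattice $I_r \in \Sigma_R^{\mathrm{lat}}(r)$, neither of which is represented by the zero module $I_0$.

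There is no real obstacle here; the only subtle point is remembering that representations are not required to be injective, so the collapsing map $\iota$ is legitimate. The parenthetical claim about rings with zero divisors needs nothing extra: Definition \ref{de:G} explicitly restricts $G_R$ to integral domains, while the construction of $\iota$ and the verification that it preserves $Q$ use only $2 = 0$ and commutativity, hence $g_R(r) = 1$ persists in full generality.
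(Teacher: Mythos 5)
Your proof is correct and uses exactly the same argument as the paper: the collapsing map $\iota\bigl(\sum\alpha_i\vect{e}_i\bigr)=\bigl(\sum\alpha_i\bigr)\vect{e}$ is a representation $I_n\to I_1$ by the freshman's dream, and composing representations gives the result. The only additions are the (trivially true) lower bounds and the explicit remark that representations need not be injective, both of which the paper leaves implicit.
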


	\subsection{Number fields, orders, Dedekind domains}
	Let $K$ be a number field with the ring of integers $\O_K$. Its degree over any subfield $F$ is denoted by $[K:F]$. If $[K:\Q]=d$, then $\O_K$ is a free $\Z$-module of rank $d$, and any basis of this module is called \emph{integral basis} of $\O_K$. An \emph{order} in $K$ is any subring $\O \subset \O_K$ which is also a $\Z$-module of rank $d$. In particular, $\O_K$ is the maximal order with respect to inclusion.
	
	A number field $K$ is called \emph{totally real} if all its embeddings into $\C$ actually map it into $\R$. With one exception, our results and proofs work for all number fields, but they are only interesting for the totally real ones, since $g_{\O_K}(r)\le r+3$ for every $K$ which is not totally real.
	
	Most of the time, the only property of the ring of integers $\O_K$ which we need is the fact that it is a \emph{Dedekind domain}. 
	If $R$ is a Dedekind domain, then $R$-lattices are nothing else than finitely generated torsion-free $R$-modules. By the structure theorem \cite[Ch.\ ~1, Thm.\ ~1.32]{Na}, any $R$-lattice $L$ of rank $d$ can be written as a direct sum $\pa_1\vect{x}_1 \oplus \cdots \oplus \pa_d\vect{x}_d$ where $\pa_i$ are fractional ideals and $\vect{x}_i \in L$ (although sometimes it is useful to take them in the vector space $F \cdot L$ where $F$ is the quotient field of $R$).
	
	We shall need the following properties, which follow from the structure theorem:
	\begin{lemma} \label{le:Dedekind}
		Let $R$ be a Dedekind domain.
		\begin{enumerate}
			\item Every lattice $L$ of rank $r$ over $R$ can be written in the form $L = R\vect{x}_1 + \cdots + R\vect{x}_{r-1} + \PA^{-1}\vect{x}_r$, where $\vect{x}_j \in L$ and $\PA$ is an integral ideal in $R$.
			\item Let $S \subset R$ be another Dedekind domain such that $R$ is a torsion-free $S$-module of rank $d$. Then: Any ideal $\PA$ in $R$ can be written as $S\gamma_1 + \cdots + S\gamma_{d-1} + \pb^{-1}\gamma_d$ for $\gamma_i \in \PA$ and $\pb$ an integral ideal in $S$. In particular, $R = S\beta_1 + \cdots + S\beta_{d-1} + \pa^{-1}\beta_d$ for $\beta_i\in R$ and an integral ideal $\pa$ in $S$.
		\end{enumerate}
	\end{lemma}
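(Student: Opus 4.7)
The plan is to derive both parts from the structure theorem for finitely generated torsion-free modules over a Dedekind domain (the one cited just before the lemma), combined with the classical ``Steinitz-type'' fact that for any two fractional ideals $\pa,\pb$ in $R$ one has $\pa\vect{u} \oplus \pb\vect{v} = R\vect{u}' \oplus \pa\pb\,\vect{v}'$ for suitable $\vect{u}',\vect{v}'$ in the ambient vector space. The heart of the argument is to keep every decomposition realised \emph{inside} the given module, rather than only up to abstract isomorphism.

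For part (1), I would start with the presentation $L = \pa_1\vect{y}_1 \oplus \cdots \oplus \pa_r\vect{y}_r$ provided by the structure theorem, and then inductively absorb the ideals to the right: one replaces the first two summands $\pa_1\vect{y}_1 \oplus \pa_2\vect{y}_2$ by $R\vect{x}_1 \oplus (\pa_1\pa_2)\vect{z}_2$ via the Steinitz lemma, and repeats, reaching $L = R\vect{x}_1 \oplus \cdots \oplus R\vect{x}_{r-1} \oplus \pa\,\vect{x}_r$ with $\pa = \pa_1\cdots\pa_r$. Since every ideal class in a Dedekind domain contains an integral representative, I pick an integral $\PA$ with $[\PA] = [\pa]^{-1}$, so $\PA\pa = (c)$ for some $c \in R$. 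Replacing $\vect{x}_r$ by $c\vect{x}_r$ (which still lies in $L$ because $c \in \PA\pa \subset \pa$) gives $\pa\vect{x}_r = \PA^{-1}(c\vect{x}_r)$, which is the claimed form.

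For part (2), I would observe that any nonzero ideal $\PA$ of $R$ is in particular an $S$-submodule of $R$, hence finitely generated over the Noetherian ring $S$ (since $R$ is a finitely generated $S$-module of rank $d$) and torsion-free. Its $S$-rank is $d$ because $F_S \otimes_S \PA$ is a nonzero ideal of the field $F_R$, so equals $F_R$, which has $F_S$-dimension $d$. Thus $\PA$ is an $S$-lattice of rank $d$, and applying part (1) with the Dedekind domain $S$ in place of $R$ produces the stated presentation $\PA = S\gamma_1 + \cdots + S\gamma_{d-1} + \pb^{-1}\gamma_d$ with $\pb$ integral in $S$. The ``in particular'' clause is the special case $\PA = R$.

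The main obstacle is the Steinitz-style step: one must realise the decomposition $R\vect{u}' \oplus \pa\pb\,\vect{v}'$ literally inside $\pa\vect{u} \oplus \pb\vect{v}$, not just up to isomorphism. This is standard but requires some care; the usual route is to pick coprime integral representatives $\PA_0 \sim \pa^{-1}$ and $\PB_0 \sim \pb^{-1}$ (possible by strong approximation in a Dedekind domain), which then furnish an explicit surjection $\pa\vect{u} \oplus \pb\vect{v} \twoheadrightarrow R$ whose projective kernel is isomorphic to $\pa\pb$, and whose splitting can be written in terms of vectors already lying in the original submodule. Once this step is in place, both parts follow by routine bookkeeping, and the scaling by $c$ that guarantees $c\vect{x}_r \in L$ in the last step of part (1) comes for free from the choice of integral $\PA$.
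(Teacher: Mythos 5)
Your proof is correct. The paper offers no proof of this lemma beyond the remark that it follows from the structure theorem for finitely generated torsion-free modules over a Dedekind domain, and your argument is a complete derivation along exactly those lines, correctly handling the two points that genuinely need care: realising the Steinitz isomorphism $\pa\vect{u}\oplus\pb\vect{v}\simeq R\vect{u}'\oplus\pa\pb\,\vect{v}'$ inside the given module rather than only up to abstract isomorphism, and the final rescaling by $c$ that keeps the last generator inside $L$.
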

	
	The most typical situation when (2) applies is if $K \supset F$ are number fields with $d=[K:F]$ and $R=\O_K$, $S=\O_F$. It is important to note that if $\O_F \neq \Z$, there is not necessarily an integral basis of $\O_K$ over $\O_F$ (this happens if and only if $\pa = \O_F$), but there still exists the above \emph{pseudo-basis} $(\beta_1, \ldots, \beta_d)$. 
	
	\smallskip
	
	As mentioned, over a Dedekind domain, $R$-lattices of rank $r$ are exactly the $R$-modules of the form $L=\pa_1 \vect{x}_1 + \cdots + \pa_r \vect{x}_r$ where $\pa_i$ are fractional ideals in $R$ and $\vect{x}_i \in L$ are linearly independent in the vector space $F \otimes_R L$ where $F$ is the quotient field. A quadratic lattice is then any quadratic module on such an $R$-lattice. It is beneficial to consider this the definition of a quadratic lattice since the original definition from Subsection \ref{ss:lattices} is more difficult to work with. This lattice is free if and only if $\pa_1\cdots\pa_r$ is a principal ideal \cite[Ch.\ ~1, Thm.\ ~1.32]{Na}.
	
	If $S \subset R$ are two Dedekind domains and $L = \pa_1x_1 + \cdots + \pa_rx_r$ is a quadratic $S$-lattice (with quadratic map $Q$), we can (and often will) ``extend the scalars'' by taking the tensor product: $R \otimes_S L$ is the quadratic $R$-lattice $R\pa_1x_1 + \cdots + R\pa_rx_r$ (where the quadratic map $Q_R$ extends $Q$).
	
	Note that a non-maximal order $\O$ is never a Dedekind domain. In particular, although we defined $G_{\O}(r)$, we will prove almost nothing nontrivial about it (only Proposition \ref{pr:grows}) since the theory of lattices over non-maximal orders is much more involved than over Dedekind domains.

	\section{Observations about \texorpdfstring{$G$}{F}}\label{se:observations}
	
	One simple property of the classical $g$-invariant is $g_{\O}(r+1) \ge g_{\O}(r)+1$ over any totally real order $\O$ (see below and compare to \cite[Cor.\ ~2.4]{BLOP}). We show that the same holds for the lattice version $G_{\O}$ as well.
	
	\begin{proposition} \label{pr:grows}
		Let $K$ be a totally real number field and $\O \subset \O_K$ any order. Then:
		\begin{enumerate}
			\item $G_{\O}(r) > G_{\O}(s)$ for $r > s$.
			\item $G_{\O}(r)-r \ge G_{\O}(s)-s$ for $r\ge s$.
			\item $ G_{\O}(r)\ge G_{\O}(1)+r-1$ for all $r\ge 1$.
		\end{enumerate}
		The same is true if we replace $G_{\O}$ by $g_{\O}$.
	\end{proposition}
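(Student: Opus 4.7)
The plan is to prove the single recursive inequality $G_\O(s+1)\geq G_\O(s)+1$, from which every statement of the proposition follows at once: iterating it gives (2), setting $s=1$ in (2) gives (3), and (1) is immediate from (2) since $r>s$ means $r-s\geq 1$. The argument for $g_\O$ will be verbatim, with ``free lattice'' inserted in the appropriate places, since the construction below preserves freeness.

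I would pick a rank-$s$ quadratic $\O$-lattice $L$ achieving $\ell(L)=G_\O(s)$, so $L\to I_N$ for some $N$, and consider the rank-$(s+1)$ lattice $L'=L\perp\uqf{1}$. Clearly $L'\to I_{N+1}$, so $L'\in\Sigma^{\mathrm{lat}}_\O(s+1)$; hence it suffices to show $\ell(L')\geq \ell(L)+1$, because then $G_\O(s+1)\geq \ell(L')\geq G_\O(s)+1$. Suppose for contradiction that there is a representation $\iota\colon L'\to I_n$ with $n=\ell(L)$. Let $\vect{e}$ generate the $\uqf{1}$-summand of $L'$ and write $\vect{w}=\iota(\vect{e})=(w_1,\dots,w_n)\in\O^n$; then $\sum_i w_i^2 = 1$.

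The heart of the argument is showing that $\vect{w}$ is, up to sign, a standard basis vector of $I_n$. Since $K$ is totally real, every embedding $\sigma\colon K\hookrightarrow\R$ yields $\sum_i\sigma(w_i)^2=1$, so $|\sigma(w_i)|\leq 1$ for every $i$ and $\sigma$. Thus each $w_i$ is an algebraic integer all of whose conjugates lie in $[-1,1]$; by Kronecker's theorem such an element is either $0$ or a root of unity, and in a totally real field the only roots of unity are $\pm 1$. Combined with $\sum w_i^2=1$, this forces exactly one coordinate to be $\pm 1$ and the rest $0$. Since $\vect{e}$ is orthogonal to $L$ in $L'$, we get $\iota(L)\subset \vect{w}^\perp\simeq I_{n-1}$, yielding $L\to I_{n-1}$ and contradicting $\ell(L)=n$.

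The only non-formal step is the one just above, and this is precisely where the totally real hypothesis is indispensable: in a field with complex embeddings, norm-one algebraic integers need not be $\pm 1$, so the orthogonal complement of $\vect{w}$ need not split off an $I_{n-1}$, and the recursion would collapse — consistent with the fact that for non-totally-real $\O_K$ one already has the much tighter $g_{\O_K}(r)\leq r+3$ and no such monotonicity obstruction is relevant.
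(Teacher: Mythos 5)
Your proof is correct and follows essentially the same route as the paper: reduce all three claims to the single inequality $G_{\O}(s+1)\ge G_{\O}(s)+1$, adjoin $\perp\langle 1\rangle$ to a worst-case rank-$s$ lattice, and use the fact that a vector of norm $1$ in $I_n$ over a totally real order must be a signed standard basis vector, forcing the image of $L$ into an $I_{n-1}$. The only (immaterial) difference is in justifying that last fact: the paper observes that $1$ is never a sum of two totally positive algebraic integers, while you invoke Kronecker's theorem on the coordinates; both are valid one-line arguments.
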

	\begin{proof}
		We show the claims about $G_{\O}$. The proofs for $g_{\O}$ are verbatim the same, except that every occurrence of ``lattice'' is replaced by ``free lattice''.
		
		Now we explain that it is enough to show $G_{\O}(r+1) \ge G_{\O}(r)+1$ for every $r$. 
		This inequality immediately gives $(1)$. Further, $G_{\O}(r+1)\ge G_{\O}(r)+1$ can be rearranged as $G_{\O}(r+1)-(r+1)\ge G_{\O}(r)-r$, and recursively, we get $(2)$. Finally, letting $s=1$ in $(2)$ gives us $(3)$.
		
		To show $G_{\O}(r+1) \ge G_{\O}(r)+1$, we shall use the fact that the only way to express $1$ as a sum of squares in $\O$ is $1 = (\pm1)^2$. (Generally, $1$ can never be written as a sum of two totally positive algebraic integers.)
		
		Let $G=G_{\O}(r)$ and take any lattice $L$ (with quadratic map $Q$) of rank $r$ such that $L \to I_N$ for some $N\in \N$, but $L \not\rightarrow I_{G-1}$. Consider now the lattice $L \perp \uqf{1}$ of rank $r+1$. Clearly $L \perp \uqf{1} \to I_{N+1}$. We claim that $L \perp \uqf{1} \not\rightarrow I_G$.
		
		Consider any representation $\iota: L \perp \uqf{1} \to I_G$. Denote by $\vect{f}$ a generating vector of $\uqf{1}$. Then $\iota(\vect{f})$ is a vector in $I_G$ such that $Q(\iota(\vect{f}))=1$. The only such vectors are $\pm$ vectors of the standard basis, as $1 = (\pm1)^2$ is the only way to write $1$ as a sum of squares. Let us say $\iota(\vect{f})=\vect{e}_G$. Then $\iota$ maps $L$ into the orthogonal complement of $\vect{e}_G$. This is a representation $L \to I_{G-1}$ and thus a contradiction.
	\end{proof}

The following proposition may not be true for non-maximal orders, since it significantly uses the properties of lattices over Dedekind domains.

	\begin{proposition}\label{pr:G<g}
		For any Dedekind domain $R$ we have $g_R(r) \leq G_R(r) \leq g_R(r+1)$.
	\end{proposition}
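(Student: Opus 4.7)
The first inequality $g_R(r) \leq G_R(r)$ is essentially tautological and can be disposed of in one line: by definition (or rather by its alternative lattice-theoretic reformulation), $g_R(r)$ is the minimum $n$ such that $I_n$ represents all \emph{free} rank-$r$ lattices in $\Sigma^{\mathrm{lat}}_R(r)$, whereas $G_R(r)$ demands the same for \emph{all} rank-$r$ lattices in $\Sigma^{\mathrm{lat}}_R(r)$; the latter family is larger.

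For the harder inequality $G_R(r) \leq g_R(r+1)$, the plan is: take an arbitrary $L \in \Sigma^{\mathrm{lat}}_R(r)$ and embed it as a sublattice of a \emph{free} rank-$(r+1)$ lattice $L'$ which still lies in $\Sigma^{\mathrm{lat}}_R(r+1)$. Once this is done, $L' \to I_{g_R(r+1)}$ holds by the definition of $g_R(r+1)$, and restricting this representation to $L$ gives $L \to I_{g_R(r+1)}$, which is exactly what is needed. To construct $L'$, I would invoke Lemma~\ref{le:Dedekind}(1) to write $L = R\vect{x}_1 + \cdots + R\vect{x}_{r-1} + \PA^{-1}\vect{x}_r$ for some integral ideal $\PA$ of $R$. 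The Steinitz class of $L$ is $[\PA^{-1}]$, so I would add a rank-one piece of Steinitz class $[\PA]$ to cancel it: set $L' = L \perp \PA\vect{y}$, where $\vect{y}$ is a new generator in an orthogonal direction of the ambient quadratic space, equipped with $Q(\vect{y})=1$.

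Two things now have to be checked. First, $L'$ is free of rank $r+1$: by the structure theorem for modules over Dedekind domains (quoted as Lemma~\ref{le:Dedekind} and the surrounding discussion), $L'$ has Steinitz class $[\PA^{-1}]\cdot[\PA]=[R]$, hence is isomorphic to $R^{r+1}$ as an $R$-module. Second, $L' \to I_{N+1}$ whenever $L \to I_N$: the assignment $\beta\vect{y} \mapsto \beta\vect{e}$ defines a $Q$-preserving embedding $\PA\vect{y}\hookrightarrow R\vect{e} = I_1$ (well-defined because $\PA\subset R$, so $\beta\in R$, and $Q(\beta\vect{y})=\beta^2=Q(\beta\vect{e})$), and combining this with the assumed representation $L \to I_N$ orthogonally yields $L'\to I_{N+1}$.

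The step I expect to be slightly delicate — and this is really the only thing to get right — is the simultaneous fulfilment of freeness and representability of $L'$. Adding any free rank-one piece $\uqf{c}$ would preserve representability but would \emph{not} kill the Steinitz class of $L$; on the other hand, some ad hoc attempts to put a nontrivial quadratic form on $\PA\vect{y}$ can force values $Q(\beta\vect{y})$ out of $R$, because $\alpha\in\PA$ does not imply $\alpha^{-1}\in\PA^{-1}$ (the two inclusions go the wrong way). The neat resolution is that the simplest scaling $Q(\vect{y})=1$ already does the job, since $\beta\in\PA\subset R$ automatically gives $\beta^2\in R$ and the obvious map $\beta\vect{y}\mapsto\beta\vect{e}$ is a representation into $I_1$.
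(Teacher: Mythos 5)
Your proposal is correct and follows essentially the same route as the paper: the paper also writes $L = R\vect{x}_1 + \cdots + R\vect{x}_{r-1} + \PA^{-1}\vect{x}_r$ via Lemma~\ref{le:Dedekind}(1) and forms $L \perp \PA I_1$, which is exactly your $L \perp \PA\vect{y}$ with $Q(\vect{y})=1$, a free rank-$(r+1)$ sublattice of $I_{N+1}$, then restricts the resulting representation into $I_{g_R(r+1)}$ back to $L$. The only cosmetic difference is that the paper first uses Lemma~\ref{le:altdef} to assume $L$ sits inside $I_N$, whereas you carry the representation $L \to I_N$ along explicitly; both are fine.
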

	\begin{proof}
		The first inequality is obvious from the definition. For the second, denote $g=g_R(r+1)$ and consider any $L$ of rank $r$ which is a sublattice of $I_N$ for some $N \in \N$. By Lemma \ref{le:altdef}, it is enough to show that $L \to I_g$.
		
		By assumption, $L$ is a sublattice of $I_N$ of rank $r$:
		\[
		L = R\vect{x}_1 + \cdots + R\vect{x}_{r-1} + \pa^{-1}\vect{x}_r
		\]
		with $\vect{x}_j \in I_N$ and $\pa$ an integral ideal in $R$.
		
		Denote $A = \pa I_1$; clearly it is a unary sublattice of $I_1$. Put $L_1 = L \perp A$; this is a quadratic sublattice of $I_{N+1}$. It is a free lattice of rank $r+1$, so by the definition of $g$ we have $L_1 \to I_g$. Restriction to $L$ yields a representation $L \to I_g$.
	\end{proof}
		
		
	
	Assuming Theorem \ref{th:main} (3), i.e.\ the inequality $g_{\O}(r) \leq G_{\O_F}(rd)$, this yields the second part of Theorem \ref{th:main} (1) as a simple corollary:
	
	\begin{corollary}\label{co:rd+1}
		Let $K \supset F$ be number fields, $[K:F]=d$, and $\O$ be any order in $K$ containing $\O_F$. Then $g_{\O}(r) \leq g_{\O_F}(rd+1).$
	\end{corollary}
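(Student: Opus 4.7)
The plan is to simply chain two inequalities that are already at our disposal. By hypothesis we may use Theorem \ref{th:main} (3), which gives
\[
g_{\O}(r) \le G_{\O_F}(rd).
\]
Since $F$ is a number field, $\O_F$ is a Dedekind domain, so Proposition \ref{pr:G<g} applies with $R = \O_F$ and $r$ replaced by $rd$, yielding
\[
G_{\O_F}(rd) \le g_{\O_F}(rd+1).
\]
Concatenating these two inequalities produces the desired bound $g_{\O}(r) \le g_{\O_F}(rd+1)$.

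There is essentially no obstacle: the entire substance of the corollary lives in the two results being invoked. The role of Theorem \ref{th:main} (3) is to bypass the potential non-freeness of $\O$ over $\O_F$ by working with the lattice invariant $G$ on the right-hand side, and the role of Proposition \ref{pr:G<g} is precisely to exchange a lattice invariant $G_{\O_F}(n)$ for a free invariant $g_{\O_F}(n+1)$ at the cost of one extra variable (using the trick of padding a non-free rank-$n$ sublattice of some $I_N$ with an ideal summand $\pa I_1 \subset I_1$ to turn it into a free rank-$(n+1)$ lattice). Thus the ``$+1$'' in the statement is exactly the cost of this freeness correction.

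Therefore the proof is a one-line citation: apply Theorem \ref{th:main} (3) and then Proposition \ref{pr:G<g}. No calculation or further lattice-theoretic argument is needed, and no hypothesis on whether $\O$ is a free $\O_F$-module enters — the freeness issue has been absorbed into the invariant $G$ on the intermediate step.
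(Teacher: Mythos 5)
Your proof is correct and is exactly the paper's argument: the paper likewise derives the corollary by chaining Theorem \ref{th:main} (3) with Proposition \ref{pr:G<g} applied over the Dedekind domain $\O_F$. Your added commentary on where the ``$+1$'' comes from accurately reflects the padding trick in the proof of Proposition \ref{pr:G<g}.
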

	\begin{proof}
		Theorem \ref{th:main} (3) together with the previous proposition implies $g_{\O}(r) \leq G_{\O_F}(rd) \leq g_{\O_F}(rd+1)$.
	\end{proof}
	
	Let us stress once again that in most extensions $K/F$, this is the best (to our knowledge) available inequality between $g_{\O_K}(\,\cdot\,)$ and $g_{\O_F}(\,\cdot\,)$ since $\O_K$ usually does not have an integral basis over $\O_F$, which makes it impossible to use the first part of Theorem \ref{th:main} (1).
	
	\section{The main proof}\label{se:proof}
	
	In this section we prove the key result of this paper -- the inequalities between the $g$-, resp.\ $G$-invariants of a ring and its subring. This theorem implies most of the other contents of this article. Since the proof does not use any number theoretic properties, we formulate it more generally for commutative rings with unity and for Dedekind domains. Namely, we prove the following:
	
	\begin{theorem}\label{th:main_general}
		Let $R \supset S$ be commutative rings with unity.
		\begin{enumerate}
			\item If $R$ is generated by $d$ elements as an $S$-module, then $g_R(r) \leq g_S(rd)$.
			\item If $R,S$ are Dedekind domains and $R$ is a torsion-free $S$-module of rank $d$, then $G_R(r)\leq G_S(rd)$.
			\item If $S$ is a Dedekind domain and $R$ is a torsion-free $S$-module of rank $d$, then $g_R(r)\leq G_S(rd)$.
		\end{enumerate}
	\end{theorem}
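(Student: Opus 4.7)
The plan is to prove all three parts through a uniform ``descent'' construction: from the given datum over $R$ (a form $\phi$ in (1), a lattice $L$ in (2) and (3)), build an auxiliary $S$-quadratic lattice $\tilde L$ of $S$-rank $rd$, together with (i) an $S$-representation $\tilde L \to I_N$ over $S$ (witnessing $\tilde L \in \Sigma^{\mathrm{lat}}_S(rd)$) and (ii) an $R$-linear form-preserving map $L \to R \otimes_S \tilde L$. Once both are in place, applying $g_S(rd)$ (in (1), where $\tilde L$ will be free) or $G_S(rd)$ (in (2) and (3)) yields $\tilde L \to I_n$ over $S$; extending scalars gives $R \otimes_S \tilde L \to R \otimes_S I_n = I_n$ over $R$, and composing with (ii) produces the desired representation $L \to I_n$ over $R$.

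For (1) I would execute this directly on polynomials. Fix generators $\beta_1, \ldots, \beta_d$ of $R$ over $S$ and a representation $\phi = \sum_{i=1}^N L_i^2$. Expanding each $R$-coefficient of $L_i$ as $\sum_k s_{i,j,k}\beta_k$ with $s_{i,j,k} \in S$ and introducing $rd$ formal variables $Y_{j,k}$, set $\tilde\phi(Y) = \sum_i \bigl(\sum_{j,k} s_{i,j,k} Y_{j,k}\bigr)^2$. Then $\tilde\phi \in \Sigma_S(rd)$, so by definition of $g_S(rd)$ it is a sum of $g_S(rd)$ squares of $S$-linear forms $M_\ell(Y)$; substituting $\beta_k X_j$ for $Y_{j,k}$ recovers $\phi$ as a sum of $g_S(rd)$ squares of $R$-linear forms in $X_1, \ldots, X_r$.

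For (2) and (3) I would recast this lattice-theoretically using Lemma \ref{le:Dedekind}. Fix $\iota_0: L \to I_N$ over $R$ and write $L = R\vect{x}_1 + \cdots + R\vect{x}_{r-1} + \PA^{-1}\vect{x}_r$ (with $\PA = R$ in (3)). Using the pseudo-bases $R = S\beta_1 + \cdots + \pa^{-1}\beta_d$ and $\PA = S\nu_1 + \cdots + \pb^{-1}\nu_d$ from Lemma \ref{le:Dedekind}(2), decompose each coefficient $\alpha_{i,j} \in R$ (resp.\ $\alpha_{i,r} \in \PA$) of $\iota_0(\vect{x}_j)$ into $S$-scalars $\sigma_{i,j,k}$. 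Let $\tilde L$ be the $S$-lattice with pseudo-generators $\vect{u}_{j,k}$, coefficient ideal $S$ attached to each $\vect{u}_{j,k}$ except $\pa$ for $(j,d)$ with $j<r$ and $\pb$ for $(r,d)$, and Gram form $\tilde B(\vect{u}_{j,k},\vect{u}_{j',k'}) = \sum_i \sigma_{i,j,k}\sigma_{i,j',k'}$; the map (i) sends $\vect{u}_{j,k} \mapsto \sum_i \sigma_{i,j,k}\vect{f}_i$, and (ii) sends $\vect{x}_j \mapsto \sum_k \beta_k \vect{u}_{j,k}$ for $j<r$ and $\vect{x}_r \mapsto \sum_k \nu_k \vect{u}_{r,k}$.

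The main obstacle will be the fractional-ideal bookkeeping needed to verify that both maps are well-defined and form-preserving. Concretely, one must check that each $\sigma_{i,j,k}$ pairs with its coefficient ideal into $S$ (so (i) lands in $I_N$ over $S$), and that the images of the $\vect{x}_j$ lie in $R \otimes_S \tilde L$ even after multiplication by $\PA^{-1}$; these reduce to the ideal identities $\pa\pa^{-1} = S$, $\pb\pb^{-1} = S$, $\PA\PA^{-1} = R$, together with the containment $\nu_d \in \pb\PA$ (which follows from $\pb^{-1}\nu_d \subset \PA$). This use of a single pseudo-ideal on the ``last column'' of $\tilde L$ is precisely what yields the bound $G_S(rd)$ in (3), in place of the weaker $g_S(r(d+1))$ that (1) would give after using $d+1$ $S$-generators of $R$.
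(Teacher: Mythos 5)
Your proposal is correct and follows essentially the same route as the paper's proof: decompose the coordinates of the image in $I_N$ with respect to a pseudo-basis of $R$ (and of $\PA$) over $S$, assemble an $S$-lattice of rank $rd$ represented by $I_N$ with coefficient ideals $\pa$, $\pb$ on the last column, apply $g_S(rd)$ or $G_S(rd)$, and extend scalars back to $R$. The only cosmetic differences are that you phrase part (1) in polynomial language (which the paper explicitly notes is an equivalent formulation of its free-lattice argument) and that in (2) and (3) you pull back to an abstract pseudo-lattice $\tilde L$ with a prescribed Gram form, where the paper works directly with the image sublattice $M \subset I_N$ via Lemma \ref{le:altdef}.
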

	
	Although we formulate it as one theorem and 
	the main idea behind the proofs is the same, the three statements are independent and neither of them implies any of the others. If we replace Dedekind domains by $\O_K$ for a number field $K$ and rings by (not necessarily maximal) orders $\O$, we obtain precisely the main Theorem \ref{th:main}, almost word for word.
	
	We start with part (1); its proof could be written purely in terms of polynomials, but we instead choose the equivalent language of free quadratic lattices. 
	
	\begin{proof}[Proof of Theorem \ref{th:main_general} (1)]
	By assumption, we have $R = S\beta_1 + \cdots + S\beta_d$ for some $\beta_i \in R$. We 
	let $g = g_S(rd)$. Consider a free lattice $L$ over $R$ of rank $r$ for which there is a representation $\iota: L \to I_N$; our aim is to prove that $L \to I_g$.
	
	Denote the basis vectors of $L$ by $\vect{x}_1, \ldots, \vect{x}_r$. Now, $\iota(L) = \sum_{j \leq r} R \iota(\vect{x}_j)$ is a submodule of $I_N$ over $R$. Every $\iota(\vect{x}_j)$ is an element of $R^N$, so by assumption we can write $\iota(\vect{x}_j) = \vect{f}_1^{(j)}\beta_1 + \cdots + \vect{f}_d^{(j)}\beta_d$ with $\vect{f}_i^{(j)} \in S^N$. (Since $R$ is not necessarily a free $S$-module, these ``vectors of coefficients'' $\vect{f}_i^{(j)}$ are not unique, but they do exist.)
	
	Consider now the quadratic $S$-module
	\[
	M = \sum_{i \leq d}\sum_{j \leq r} S \vect{f}_i^{(j)},
	\]
	which is a quadratic submodule of $I_N$ over $S$ generated by $rd$ elements. If it were free, then by definition of $g$ it is represented by $I_g$ over $S$; however, this is in general not the case. Thus, let us consider the following auxiliary free quadratic $S$-lattice $\widetilde{M}$ of rank $rd$: The underlying $S$-module is $S^{rd}$ and we denote its basis by $\vect{e}_{ij}$. The quadratic map $\widetilde{Q}$ is defined as follows:
	\[
	\widetilde{Q}\Bigl(\sum_{i,j} \alpha_{ij}\vect{e}_{ij}\Bigr) = Q_M\Bigl(\sum_{i,j} \alpha_{ij}\vect{f}_{i}^{(j)}\Bigr),
	\]
	where $Q_M$ is the quadratic map on $M$. Not only is $\widetilde{Q}$ a well-defined quadratic map; it was defined in such a way that the linear map $\widetilde{M} \to M$ given by $\vect{e}_{ij} \mapsto \vect{f}_i^{(j)}$ is a representation. Since $M$ is a subset of $I_N$, this yields a representation $\widetilde{M} \to I_N$ over $S$. Thus, by the definition of $g$, $\widetilde{M} \to I_g$ over $S$. If we extend scalars by taking the tensor product, we get $R \otimes_S \widetilde{M} \to R \otimes_S I_g$, which is usually written as ``$R \otimes_S \widetilde{M} \to I_g$ over $R$''.
	
	In the lattice $R \otimes_S \widetilde{M}$ one can find the vectors $\vect{y}_j = \vect{e}_{1j}\beta_1 + \cdots + \vect{e}_{dj}\beta_d$. Consider now the $R$-linear map $L \to R \otimes_S \widetilde{M}$ given by $\vect{x}_j \mapsto \vect{y}_j$. One easily sees that it respects the quadratic map and thus it is a representation. Hence we have $L \to R \otimes_S \widetilde{M} \to I_g$, which concludes the proof.
	\end{proof}
	We note that the proof does not require the quadratic structure on the modules; it would work just as well for modules equipped with a different map type. In particular, a statement analogous to Theorem \ref{th:main_general} holds for representing cubic forms by sums of cubes, etc. 
	
	\smallskip
	
	
	The quadratic structure does not play the main role in the proofs of the second and third parts of Theorem \ref{th:main_general}, either. Instead, most of the work consists of handling the underlying $S$-modules and $R$-modules seen as subsets of $\bigl(\FoF{S}\bigr)^n$ or $\bigl(\FoF{S}\cdot R\bigr)^n$ (where $\FoF{\,\cdot\,}$ means the quotient field) and showing that they are in fact subsets of $S^n$ or $R^n$. Note that for an ideal $\pa \subset S$ we write $(\pa S)^n$ to mean $n$-dimensional vectors with coordinates in $\pa$; this is to avoid confusion with powers of ideals.
	
	The definitions of lattices in part (2) are more complicated than in (1) since one has to work with non-principal ideals. On the other hand, we can avoid using the auxiliary lattice $\widetilde{M}$ thanks to 
	Lemma \ref{le:altdef}.

	\begin{proof}[Proof of Theorem \ref{th:main_general} (2)]
		Consider a quadratic $R$-lattice $L$ of rank $r$ which is a sublattice of $I_N$ for some $N \in \N$. By Lemma \ref{le:altdef}, it is enough to show $L\to I_G$ over $R$, where $G=G_S(rd)$. By Lemma \ref{le:Dedekind} (1), $L = R \vect{x}_1 + \cdots + R\vect{x}_{r-1} + \PA^{-1}\vect{x}_r,$
		where each $\vect{x}_i$ is an element of $R^N$ and $\PA$ is an integral ideal. Observe that $\PA^{-1}\vect{x}_r \subset L$ implies $\vect{x}_r \in (\PA R)^N$.
		
		Lemma \ref{le:Dedekind} (2) provides us with the pseudo-bases $(\beta_i)_{i\leq d}$ of $R$ and $(\gamma_i)_{i\leq d}$ of $\PA$, together with integral ideals $\pa$ and $\pb$ in $S$, such that
		\begin{align*}
			R &= S\beta_1 + \cdots + S\beta_{d-1} + \pa^{-1}\beta_d \qquad\text{ with $\beta_i\in R$,}\\
			\PA &= S\gamma_1 + \cdots + S\gamma_{d-1} + \pb^{-1}\gamma_d \qquad\text{ with $\gamma_i \in \PA$.}
		\end{align*}
		Now define the vectors $\vect{f}_i^{(j)} \in \bigl(\FoF{S}\bigr)^N$ as the coordinates of $\vect{x}_j$ with respect to these pseudo-bases:
		\begin{align*}
			\vect{x}_j &= \vect{f}_1^{(j)}\beta_1 + \cdots + \vect{f}_d^{(j)}\beta_d \qquad \text{ for $1 \leq j \leq r-1$};\\
			\vect{x}_r &= \vect{f}_1^{(r)}\gamma_1 + \cdots + \vect{f}_d^{(r)}\gamma_d.
		\end{align*}
		
		If $i\neq d$, then $\vect{f}_i^{(j)} \in S^N$. For $j\neq r$ one has $\vect{f}_d^{(j)} \in (\pa^{-1}S)^N$ and in the last case $\vect{f}_d^{(r)} \in (\pb^{-1}S)^N$. Therefore
		$M = \sum_{i<d, j \leq r} S\vect{f}_i^{(j)} \,+\,  \sum_{j<r} \pa\vect{f}_d^{(j)} \,+\, \pb\vect{f}_d^{(r)}$
		is a well-defined sublattice of $S^N$ of rank at most $rd$. We can also understand it as a quadratic sublattice of $I_N$ over $S$. By the definition of $G=G_S(rd)$ (or, strictly speaking, by Lemma \ref{le:altdef}), it is represented by $I_G$ over $S$.
		
		Since $M \to I_G$ over $S$, we get $R \otimes_S M \to I_G$ over $R$. On the other hand, one can explicitly write
		\[
		R \otimes_S M = \sum_{i<d,\, j \leq r} R\vect{f}_i^{(j)} \,+\,  \sum_{j<r} (R\pa)\vect{f}_d^{(j)} \,+\, (R\pb)\vect{f}_d^{(r)}.
		\]
		That is, $M$ was defined in such a way that $\vect{x}_j \in R \otimes_S M$ for $j \leq r-1$ and $\PA^{-1}\vect{x}_r \subset R \otimes_S M$ -- to see this, observe $\beta_d \in R\pa$, $\PA^{-1}\gamma_i \subset R$ and $\PA^{-1}\gamma_d \subset  R \pb$. In particular, $L$ is a sublattice of (and thus represented by) $R \otimes_S M$. Composition of these two representations yields $L \to R \otimes_S M \to I_G$ over $R$. This is the desired representation $L \to I_G$.
	\end{proof}
	
	The proof of part (3) is almost the same as for (2) and the technical details are in fact slightly simpler. Therefore our explanations will be less detailed.
	
	\begin{proof}[Proof of Theorem \ref{th:main_general} (3)]
		By assumption, we have $R = S \beta_1 + \cdots + S\beta_{d-1} + \pa^{-1}\beta_d$, where each $\beta_i \in R$ and $\pa$ is an integral ideal in $S$. Clearly, $\pa^{-1} \beta_d \subset R$.

		Denote $G=G_S(rd)$. Consider any free $R$-lattice $L$ in $r$ variables which is represented by $I_N$. We need to show that $L$ is represented by $I_G$ over $R$. Denote by $L'$ the quadratic $R$-submodule of $I_N$ which is the image of $L$ under the representation $L \to I_N$. It is generated by $r$ vectors, say $L' = R\vect{x}_1 + \cdots + R\vect{x}_r$, where all $\vect{x}_j \in R^N$. Since $L \to L'$, it suffices to show that $L' \to I_G$.
		
		Denote $\vect{x}_j = \vect{f}_1^{(j)}\beta_1 + \cdots + \vect{f}_{d-1}^{(j)}\beta_{d-1} + \vect{f}_d^{(j)}\beta_d.$
		It is important to note that while $\vect{f}_i^{(j)} \in S^N$ for $1\leq i \leq d-1$ (and $1\leq j \leq r$), one only gets $\vect{f}_d^{(j)} \in (\pa^{-1} S)^N$.
		
		Define the $S$-lattice $
		M = \sum_{j \leq r} \Bigl( S\vect{f}_1^{(j)}  + \cdots + S\vect{f}_{d-1}^{(j)} + \pa\vect{f}_d^{(j)} \Bigr)$
		of rank at most $rd$. We claim that it is a sublattice of $I_N$ over $S$; that is, it contains only vectors from $S^N$. This only has to be checked for the last summand from each bracket; and there, indeed, $\pa \cdot \vect{f}_d^{(j)} \subset \pa \cdot (\pa^{-1}S)^N \subset S^N$. Therefore, $M$ is a well-defined quadratic sublattice of $I_N$. By the definition of $G = G_S(rd)$ (or more precisely by Lemma \ref{le:altdef}), it is represented by $I_G$ over $S$.
		
		Since $M \to I_G$ over $S$, we get $R \otimes_S M \to I_G$ over $R$. We also claim $\vect{x}_j \in R \otimes_S M$ for every $j$; to see this, remember $\beta_d \in R\pa$. So, $L'$ is a sublattice of (and thus represented by) $R \otimes_S M$. Composition of these two representations yields $L' \to R \otimes_S M \to I_G$ over $R$. This is the desired representation $L' \to I_G$.
	\end{proof}

	\section{\texorpdfstring{$G(2)$}{G(2)} for quadratic rings of integers} \label{se:quadratic}
	While the definition of $g(r)$ may seem more straightforward than the definition of $G(r)$, we consider the latter to be the more useful and natural invariant. Perhaps it is not found in the literature simply because almost nothing nontrivial was known for any field of a class number other than $1$ (see \cite{Li1} for an example for Hermitian lattices). Now we can partly remedy this by deciding the $G(2)$-invariant of most real quadratic fields -- that is, we prove Theorem \ref{th:quadratic}.
	
	Combining our inequalities, we are able to get not only an upper bound but in more than half cases also the lower bound:
	
	\begin{lemma} \label{le:23mod4}
		Let $F = \Q(\!\sqrt{n})$ be a quadratic field, $n>1$ square-free. Then:
		\begin{enumerate}
			\item $G_{\O_F}(2) \leq 7$.
			\item If $n \not\equiv 1 \pmod4$, $n \ge 10$, then $G_{\O_F}(2) =7$.
		\end{enumerate} 
	\end{lemma}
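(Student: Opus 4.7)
For part (1), I would apply Theorem~\ref{th:main_general}~(2) to the tower $\Q \subset F$ with $d = [F:\Q] = 2$ and $r = 2$. This gives
\[
G_{\O_F}(2) \;\leq\; G_{\Z}(4) \;=\; g_{\Z}(4) \;=\; 7,
\]
the middle equality because $\Z$ is a PID so all $\Z$-lattices are free, and the last being the classical Mordell--Ko value recalled in the introduction.

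For part (2), the upper bound is immediate from (1), so the task reduces to proving the lower bound $G_{\O_F}(2) \geq 7$. The idea is to apply Theorem~\ref{th:main_general}~(2) \emph{in the opposite direction}: rather than descending to the subfield $\Q$, one climbs up to a totally real biquadratic extension $L = \Q(\!\sqrt{n},\sqrt{m}) \supset F$ for a suitable positive square-free integer $m \neq n$. Since $[L:F] = 2$ and $\O_L$ is a torsion-free $\O_F$-module of rank $2$, the theorem together with the trivial $\P(\O_L) \leq G_{\O_L}(1)$ yields
\[
\P(\O_L) \;\leq\; G_{\O_L}(1) \;\leq\; G_{\O_F}(2).
\]
Hence it is enough to exhibit one such $m = m(n)$ for which $\P(\O_L) = 7$; combined with (1), this forces equality $G_{\O_F}(2) = 7$.

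The main obstacle is the choice of $m$ for each admissible $n$. Here I would invoke the classification of Pythagoras numbers of biquadratic rings of integers in \cite{KRS}, which determines exactly when $\P(\O_L)$ attains the maximum possible value $7$. The hypotheses $n \not\equiv 1 \pmod 4$ and $n \geq 10$ should correspond precisely to the regime in which such an $m$ is provided by \cite{KRS}: the case $n \equiv 1 \pmod 4$ is treated separately via the explicit bad form of Proposition~\ref{pr:1mod4}, while the small real quadratic fields $F = \Q(\!\sqrt{2}), \Q(\!\sqrt{3}), \Q(\!\sqrt{6}), \Q(\!\sqrt{7})$ (which have $\P(\O_F) < 5$ and therefore admit no biquadratic extension with $\P(\O_L) = 7$ by the same monotonicity argument) need ad hoc exclusion. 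A short case analysis in the residue of $n$ modulo $4$ and in the parity of $n$ should extract the required $m$ from the tables of \cite{KRS}, completing the proof.
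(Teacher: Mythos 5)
Your proposal matches the paper's proof: part (1) is exactly the descent $G_{\O_F}(2) \leq G_{\Z}(4) = g_{\Z}(4) = 7$, and part (2) is exactly the ascent to a biquadratic field $K=\Q(\!\sqrt{n},\sqrt{n_2})$ with $\P(\O_K)=7$ supplied by \cite[Thm.~5.3]{KRS}, followed by $\P(\O_K)\leq G_{\O_K}(1)\leq G_{\O_F}(2)$ (the paper makes the choice explicit: any square-free $n_2\ge 10$ coprime to $n$ with one of $n,n_2\equiv 2$ and the other $\equiv 3 \pmod 4$). One aside in your plan is incorrect but harmless for this lemma: $\Q(\!\sqrt6)$ and $\Q(\!\sqrt7)$ \emph{do} admit a biquadratic extension with Pythagoras number $7$, namely $\Q(\!\sqrt6,\sqrt7)$ (Lemma~\ref{le:biquadratic}), and $\P(\O_F)<5$ implies no such obstruction; these fields are excluded here only because the hypothesis $n\ge 10$ fails, and they are handled separately by that lemma.
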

	\begin{proof}
		The first statement is just the inequality $G_{\O_F}(2) \leq g_{\Z}(4) = 7$.
		
		For the second, we need $7 \leq G_{\O_F}(2)$. Pick any square-free $n_2 \ge 10$ coprime to $n$ such that one of $n,n_2$ is $2$ and the other $3$ modulo $4$. By \cite[Thm.\ ~5.3]{KRS}, the biquadratic field $K = \Q(\!\sqrt{n},\sqrt{n_2})$ has $\P(\O_K) = 7$. The extension $K/F$ is of degree $2$, so Theorem \ref{th:main} applies: $7 = \P(\O_K) \leq G_{\O_F}(2)$.
	\end{proof}
	
	To prove Theorem \ref{th:quadratic}, it remains to treat the fields $\Q(\!\sqrt{n})$ for $n=6$, $n=7$ and most importantly for $n \equiv 1 \pmod4$. By a direct computation we get the following:
	\begin{lemma}\label{le:biquadratic}
		Biquadratic fields $K_1= \Q(\!\sqrt6,\sqrt7)$ and $K_2=\Q(\!\sqrt{13},\sqrt{15})$ have $\P(\O_{K_i})=7$. More specifically, the following elements are sums of seven but not of six squares in $\O_{K_i}$:
		\begin{itemize}
			\item $\alpha_1 = 43 + \sqrt{6} - 8\sqrt{7} + \sqrt{7\cdot 6}$;
			\item $\alpha_2 = 114+ 15\sqrt{13} + 20\sqrt{15} + 6\sqrt{13\cdot 15}$. 
		\end{itemize}
		Thus $G_{\O_F}(2)=g_{\O_F}(2)=7$ for $F=\Q(\!\sqrt{n})$, $n=6,7,13$.
	\end{lemma}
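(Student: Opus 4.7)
The upper bound $\P(\O_{K_i}) \leq 7$ is already provided by Theorem \ref{th:main} applied to $K_i \supset \Q$: since $[K_i:\Q]=4$, we have $\P(\O_{K_i}) \leq g_{\Z}(4) = 7$. Consequently, to establish $\P(\O_{K_i}) = 7$ it suffices to exhibit totally positive integers $\alpha_i \in \O_{K_i}$ that are sums of squares but not of six squares, and to demonstrate that they are nevertheless sums of seven squares. Both $\alpha_1$ and $\alpha_2$ can be checked to be totally positive by computing their four real embeddings. The upper half of the statement (``sum of seven squares'') is then produced by exhibiting an explicit decomposition $\alpha_i = \sum_{j=1}^{7}\beta_j^2$ with $\beta_j \in \O_{K_i}$; this is a routine finite verification once the decomposition is found, for instance via a computer search over small-height elements.

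The substantive part is showing that $\alpha_i \neq \sum_{j=1}^6 \beta_j^2$ for any $\beta_j \in \O_{K_i}$. The plan is the standard reduction to a finite search. Fix an embedding $\sigma : K_i \to \R$; if such a representation existed, then $\sigma(\alpha_i) = \sum_{j=1}^6 \sigma(\beta_j)^2$ would force $|\sigma(\beta_j)| \leq \sqrt{\sigma(\alpha_i)}$. Running over the four real embeddings of $K_i$, we bound all conjugates of each $\beta_j$, so each $\beta_j$ lies in a finite, explicitly computable subset $B \subset \O_{K_i}$ (expressed in terms of a fixed integral basis of $\O_{K_i}$). One then enumerates all $6$-tuples from $B$ and verifies that none produces $\alpha_i$. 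The only genuine obstacle is the size of this enumeration: the bounds $\sigma(\alpha_1)\lesssim 68$ and $\sigma(\alpha_2)\lesssim\!300$ keep the search tractable, but to make it manageable one should prune aggressively, e.g.\ by splitting $\alpha_i = \beta_1^2 + \gamma$ and recursively checking whether $\gamma$ is a sum of five squares, using cumulative conjugate bounds at each step.

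Given $\P(\O_{K_i}) = 7$, the final claim is immediate. Apply Theorem \ref{th:main}(2) to each extension $K_i/F$ with $[K_i:F]=2$: this yields $7 = \P(\O_{K_i}) \leq G_{\O_F}(2)$ for every quadratic subfield $F$ of $K_i$. In particular, since $\Q(\!\sqrt6)$ and $\Q(\!\sqrt7)$ are subfields of $K_1$, and $\Q(\!\sqrt{13})$ is a subfield of $K_2$, we obtain $G_{\O_F}(2) \geq 7$ for $F = \Q(\!\sqrt{n})$, $n \in \{6,7,13\}$. The matching upper bound $G_{\O_F}(2) \leq 7$ is Lemma \ref{le:23mod4}(1). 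Finally, each of the three fields has class number one, so $\O_F$ is a PID and every $\O_F$-lattice is free; hence $g_{\O_F}(2) = G_{\O_F}(2) = 7$, completing the proof.
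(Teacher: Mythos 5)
Your proposal is correct and follows essentially the same route as the paper: the upper bound via $g_{\Z}(4)=7$, a computational verification that $\ell(\alpha_i)=7$ (the paper simply delegates this to Magma, whereas you spell out the standard conjugate-bound finite search), and then $7=\P(\O_{K_i})\leq G_{\O_F}(2)\leq g_{\Z}(4)=7$ for the quadratic subfields, with class number one giving $g_{\O_F}(2)=G_{\O_F}(2)$. No substantive differences.
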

	\begin{proof}
		As soon as we check that $\alpha_i$ is a sum of six but not seven squares (i.e.\ its length is $7$), the rest is easy: $\P(\O_{K_i})=7$ will follow from the just obtained $\P(\O_{K_i}) \ge \ell(\alpha_i) = 7$ combined with the inequality $\P(\O_{K_i}) \leq g_{\Z}(4)=7$. After this, we also get $\P(\O_{K_i})\leq G_{\O_F}(2) \leq g_{\Z}(4)$ for $[K_i:F]=2$, so $G_{\O_F}(2)=7$; and since all three quadratic fields have class number $1$, we also have $g_{\O_F} = G_{\O_F}$.
		
		So it remains to explain that $\alpha_i$ is a sum of seven but not of six squares in $\O_{K_i}$, which we computed using Magma \cite{BCP}.
	\end{proof}
	
	On a side note, the lemma also yields $g_{\O_F}(2)=7$ for $F=\Q(\!\sqrt{15})$: Although the class number is $2$ and therefore the equality $g_{\O_F}(2)=G_{\O_F}(2)$ is not immediate, we have $\O_K = \Z[\sqrt{15}] \cdot 1 + \Z[\sqrt{15}] \cdot \frac{1+\sqrt{13}}{2}$, so $\O_K$ is a free $\O_F$-module, implying $7=\P(\O_K) \leq g_{\O_F}(2)$.
	
	\subsection{The case 1 mod 4} \label{ss:1mod4}
	
	For $F=\Q(\!\sqrt{n})$, $n=5$ we have Sasaki's result $G_{\O_F}=g_{\O_F}(2)=5$. The case $n=13$ was handled separately in Lemma \ref{le:biquadratic}. In this subsection we treat all the remaining real quadratic fields $\Q(\!\sqrt{n})$ with $n \equiv 1 \pmod4$.
	
	It is worth mentioning that the form $\phi$ in the following theorem was discovered by examining elements of length seven in real biquadratic fields $\Q(\!\sqrt{p},\sqrt{n})$ for the first few values of $n$ and for $p \not\equiv 1 \pmod4$ coprime with $n$. Quite probably there are other forms with the same property.
	
	\begin{proposition} \label{pr:1mod4}
		If $F=\Q(\!\sqrt{n})$ for $n\ge 17$ square-free, $n \equiv 1 \pmod4$, then $g_{\O_F}(2) \ge 7$. In particular, denote
		\begin{align*}
			\phi(X,Y) &= \Bigl(7 + \bigl(\tfrac{1+\sqrt{n}}{2}\bigr)^2\Bigr)X^2 + \Bigl(\tfrac{5+\sqrt{n}}{2}X + Y\Bigr)^2 + \Bigl((5+\sqrt{n})X + \tfrac{1+\sqrt{n}}{2}Y\Bigr)^2\\
			&= \frac{3n+77+26\sqrt{n}}{2}X^2 + (10+n+7\sqrt{n})XY + \frac{n+5+2\sqrt{n}}{4}Y^2.
		\end{align*}
		Then $\ell(\phi)=7$, i.e.\ $\phi$ is a sum of $7$, but not of $6$ squares of binary linear forms.
	\end{proposition}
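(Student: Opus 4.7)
The plan splits into a quick upper bound and a more involved lower bound.

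For the upper bound $\ell(\phi) \leq 7$, I would use the displayed identity $\phi = (7+\omega^2)X^2 + L_1^2 + L_2^2$ together with the fact that $\P(\O_F) \leq g_{\Z}(2) = 5$ (a special case of Theorem~\ref{th:main}(1), applied to the free $\Z$-module $\O_F$): this writes the scalar $7+\omega^2 \in \O_F$ as a sum of at most five squares in $\O_F$, whence $\phi$ itself becomes a sum of at most seven squares of $\O_F$-linear binary forms.

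For the lower bound $\ell(\phi) \geq 7$, I plan to argue by contradiction: assume $\phi = \sum_{i=1}^{6}(a_iX + b_iY)^2$ with $a_i, b_i \in \O_F = \Z[\omega]$ where $\omega = \frac{1+\sqrt n}{2}$. Comparing coefficients in $\O_F$ produces three equations; the decisive one is the $Y^2$-equation $\sum b_i^2 = 1 + \omega^2$, whose right-hand side is extremely rigid. Writing $b_i = p_i + q_i\omega$ with $p_i, q_i \in \Z$ and using $\omega^2 = \omega + \frac{n-1}{4}$, this splits into the $\Z$-equations $4\sum p_i^2 + (n-1)\sum q_i^2 = n+3$ and $\sum q_i^2 + 2\sum p_iq_i = 1$. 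For $n \geq 17$ the first forces $\sum q_i^2 \leq 1$, and combining with the second pins down $\sum p_i^2 = \sum q_i^2 = 1$ and $\sum p_iq_i = 0$; after relabelling indices and absorbing signs via $L_i \mapsto -L_i$, this forces the rigid normal form $b_1 = \omega$, $b_2 = 1$, $b_3 = \cdots = b_6 = 0$.

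With the $b_i$'s so determined, I would set $a_1 = p+q\omega$, $a_2 = r+s\omega$, $a_i = P_i + Q_i\omega$ for $i\geq 3$, and $m = \frac{n-1}{4} \in \N$. The remaining two $\O_F$-equations then split into four $\Z$-equations: $r = 2+(2-q)m$, $p+q+s = 7$, $\sum P_i^2 + m\sum Q_i^2 = 6m+27$, and $2\sum P_iQ_i + \sum Q_i^2 = 26$. Since $r^2 \leq \sum P_i^2 \leq 6m+27$, the integer $q$ is confined to a short window (and in fact to $q = 2$ alone once $m \geq 12$). My main case will be $q = 2$, where $r = 2$ and $s = 5-p$; after substitution the system collapses to $p^2 + T_p + m[(5-p)^2 + T_q - 2] = 23$ and $2T_{pq} + (5-p)^2 + T_q = 2$, where $T_p, T_q, T_{pq}$ are the partial sums over $i \geq 3$. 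A short finite enumeration driven by $m \geq 4$ and non-negativity leaves only the configuration $p = 4$, $T_p = 7$, $T_q = 1$, $T_{pq} = 0$.

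The final obstruction at that point is classical: $T_q = 1$ together with $T_{pq} = 0$ forces one $Q_j$ (for some $j \geq 3$) to be $\pm 1$ with the other $Q_i$'s zero, and the paired $P_j = 0$, so the remaining three $P_i$'s must have squares summing to $7$, which is impossible by Legendre's three-square theorem since $7 \equiv 7 \pmod 8$. The finitely many residual values of $q \neq 2$ (arising only in the small range $4 \leq m \leq 11$) I expect to dispose of by immediate positivity estimates, since the contribution $r^2 = (2 + (2-q)m)^2$ combined with $mq^2$ will already exceed the right-hand side $6m+27$ in those cases. The hardest part of a full write-up will be packaging this finite case split cleanly; conceptually, however, the crux of the argument is the rigidity of the $Y^2$-equation (which reduces the continuous problem to a sharply constrained integer one) together with the three-square incompatibility.
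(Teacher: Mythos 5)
Your lower-bound strategy is sound and genuinely different from the paper's. The paper only gives a hand argument for $n\ge 73$: working in the basis $\frac{a+b\sqrt n}{2}$, it shows any sum-of-squares representation of $\phi$ must contain the two cross terms $\bigl(\tfrac{5+\sqrt n}{2}X+Y\bigr)^2$ and $\bigl((5+\sqrt n)X+\tfrac{1+\sqrt n}{2}Y\bigr)^2$ and then cites Peters for the fact that the leftover $7+\bigl(\tfrac{1+\sqrt n}{2}\bigr)^2$ needs five squares; the cases $17\le n\le 65$ are settled by Magma computations of lengths of auxiliary elements in biquadratic fields. Your parametrization in the basis $(1,\omega)$ with $m=\frac{n-1}{4}$, the early use of $r=2+(2-q)m$, and the closing appeal to Legendre's three-square theorem in place of Peters yield a uniform, computer-free, self-contained argument for all $n\ge 17$ (at the cost of being tailored to exactly six squares, which is all that is needed). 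I checked the $Y^2$-rigidity step and the $q=2$ enumeration: setting $u=(5-p)^2+T_q$ (which is even by your second equation), the cases $u=0,4,6$ and $u\ge 8$ all die from non-negativity and $m\ge 4$, and $u=2$ leaves exactly $p=4$, $T_p=7$, $T_q=1$, $T_{pq}=0$, which your three-square argument eliminates. That part is correct. Your upper bound via $\P(\O_F)\le g_{\Z}(2)=5$ matches the paper's.

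The one step that fails as stated is the disposal of $q\neq 2$. You claim $r^2+mq^2=(2+(2-q)m)^2+mq^2$ already exceeds $6m+27$ whenever $q\neq2$ and $4\le m\le 11$; writing $k=2-q$, this quantity equals $4+4m+k^2m(m+1)$, so the claim amounts to $k^2m(m+1)>2m+23$, which is false for $k=\pm1$ and $m\in\{4,5\}$ (for $m=4$ one gets $40\le 51$, for $m=5$ one gets $54\le 57$). Thus for $n=17$ and $n=21$ the values $q\in\{1,3\}$ survive your stated estimate. They are still easily killed, but you must also charge $p^2+ms^2$ with $s=7-p-q$ against the budget $6m+27$: for instance, with $m=4$, $q=1$ the remaining budget is $51-40=11$, so $4s^2\le 11$ forces $|s|\le 1$ and hence $p\in\{5,6,7\}$, while $p^2\le 11$ forces $|p|\le 3$ --- a contradiction; the subcases $(m,q)=(4,3),(5,1),(5,3)$ die the same way. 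With that repair the proof is complete.
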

	\begin{proof}
		The fact that $\phi$ is a sum of seven squares is clear from the first representation, since $7 + \bigl(\frac{1+\sqrt{n}}{2}\bigr)^2$ is a sum of five squares. So it remains to show that $\phi$ is never a sum of six or less squares.
		
		First we solve the cases $n \leq 53$. We define the biquadratic field $K=\Q(\!\sqrt{10},\sqrt{n})$. One integral basis of $K$ is $\bigl(1,\sqrt{10},\frac{1+\sqrt{n}}{2},\sqrt{10}\frac{1+\sqrt{n}}{2}\bigr)$, so $\O_K = \O_F \cdot 1 + \O_F \cdot\sqrt{10}$. Instead of directly applying the inequality $\P(\O_K)\leq g_{\O_F}(2)$, we use the simple idea from its proof: Put $\alpha_{10} = \phi(1,\sqrt{10})$. By definition, this is a sum of seven squares in $\O_K$; in fact, it is clear that $\ell_{\O_K}(\alpha_{10}) \leq \ell_{\O_F}(\phi)$. By a direct computation (e.g.\ in Magma) as in Lemma \ref{le:biquadratic}, we easily check that $\ell_{\O_K}(\alpha_{10})=7$ for $n=17$, $21$, $29$, $33$, $37$, $41$ and $53$. This means that $\ell_{\O_F}(\phi)\ge 7$ in these cases.
		
		Now we solve the cases $53 < n \leq 65$. The previous approach fails, since $\ell(\alpha_{10})$ turns out to be $5$ for all three $n$ in question. However, we can use the same trick with $10$ replaced by $11$: By the same direct computation we check that $\alpha_{11}=\phi(1,\sqrt{11})$ has length $7$ for $n=57$, $61$ and $65$. (While for $n=73$, this trick again fails, and, surprisingly, even replacing $11$ by $14$, $15$, $19$, $22$, $23$, $26$, $30$ or $31$ is useless.)
		
		Now comes the main part of the proof, for $n \ge 73$. Consider a representation of $\phi$ as a sum of squares of linear forms, i.e.\ $\phi(X,Y) = \sum_i (x_iX + y_iY)^2$ with $x_i,y_i \in \O_F$.
		By comparing the coefficients of $Y^2$ one gets $\sum y_i^2 = \frac{5+n}{4} + \frac{\sqrt{n}}{2}$. The number on the right decomposes uniquely as a sum of squares, namely as $(\pm1)^2 + \bigl(\pm \frac{1+\sqrt{n}}{2}\bigr)^2$; therefore, after possibly transferring the signs to the $x_i$ and reordering the terms, the decomposition must be of the form
		\[
		\phi(X,Y) = (x_1X + Y)^2 + \bigl(x_2X + \tfrac{1+\sqrt{n}}{2}Y\bigr)^2 + \sum_{i>2} (x_iX)^2.
		\]
		Before we proceed, observe that it suffices to show that $x_1=\frac{5+\sqrt{n}}{2}$ and $x_2=5+\sqrt{n}$: Once this is proven, one has $\sum_{i>2} x_i^2 = 7 + \bigl(\frac{1+\sqrt{n}}{2}\bigr)^2$, which requires at least five squares in $\O_F$, see \cite[p.\ ~161]{Pe}.
		
		Comparing the coefficients of $XY$ yields $x_1+x_2\frac{1+\sqrt{n}}{2} = \frac{10+n+7\sqrt{n}}{2}$, which we rearrange as
		\begin{equation} \label{eq:x1}
			x_1 = \frac{10+n+7\sqrt{n}}{2} - \frac{1+\sqrt{n}}{2}x_2.
		\end{equation}
		This means that it only remains to show $x_2 = 5 + \sqrt{n}$.
		
		Let us now focus on the equality $\frac{3n+77+26\sqrt{n}}{2} = \sum x_i^2 = \sum \bigl(\tfrac{a_i+b_i\sqrt{n}}{2}\bigr)^2$ where $a_i\equiv b_i \pmod2$.
		It can equivalently be rewritten as a system of two Diophantine equations (with the above parity condition):
		\begin{align}
			\sum a_i^2 + n\sum b_i^2 &= 154 + 6n,\label{eq:1} \\ 
			\sum a_ib_i &= 26. \label{eq:2}
		\end{align}
		Looking at \eqref{eq:2} modulo $4$, we see that there must be a nonzero even number of indices $i$ such that $a_i \equiv b_i \equiv 1 \pmod2$. In particular, $\sum b_i^2$ is an even number. Also, $\sum a_i^2 \ge 2$.
		
		We shall now prove that $\sum b_i^2 \leq 6$. If not, then $\sum b_i^2 \ge 8$, so \eqref{eq:1} gives $2 + 8n \leq 154 + 6n$, a contradiction for $n \ge 77$. It remains to deal with $n=73$. In this case, $\sum b_i^2 \ge 10$ is impossible, so we have $\sum b_i^2 = 8$; then $\sum a_i^2 = 8$ by \eqref{eq:1}. By Cauchy--Schwarz inequality, $\left|\sum a_i b_i\right| \leq \sqrt{8 \cdot 8} = 8$, which contradicts \eqref{eq:2}.
		
		So we indeed have $\sum b_i^2 \leq 6$ for $n \ge 73$. In particular, $|b_i| \leq 2$ for every $i$.
		
		Rewrite now \eqref{eq:x1} explicitly as $x_1 = \bigl(5 - \frac{a_2}{4} +n \frac{2-b_2}{4}\bigr) + \sqrt{n}\bigl(\frac72 - \frac{a_2+b_2}{4}\bigr)$, i.e.
		$a_1 = 10 - \tfrac{a_2}{2} + n\tfrac{2-b_2}{2}$ and $b_1 = 7 - \tfrac{a_2+b_2}{2}.$
		Since $|b_1|,|b_2| \leq 2$, triangle inequality applied to the second equation gives $|a_2| \leq 14 + |b_2| + |2b_1| \leq 20$. Therefore, the first equation yields $a_1 \ge n\frac{2-b_2}{2}$.
		
		We now show that $b_2=2$: If not, then $b_2<2$, so $a_1 \ge \frac{n}{2}$. However, plugging this in \eqref{eq:1} yields $\frac14 n^2 \leq 154 + 6n$, which is a contradiction for $n \ge 40$.
		
		Having proven $b_2=2$, we are almost done. It only remains to determine the even number $a_2$. Combining $\sum b_i^2 \leq 6$ with the fact that there are at least two indices $i$ such that $b_i$ is odd, one sees that necessarily $\sum b_i^2 = 6$. We have $b_1 = 6 - \frac{a_2}{2}$. Since $|b_1| \leq 1$ and $a_2$ is even, there are only three possibilities for $a_2$.
		
		First assume $a_2=14$. Plugging $\sum b_i^2 = 6$ into \eqref{eq:1}, we get $\sum a_i^2 = 154$, so our value of $a_2$ is impossible.
		
		The second option is $a_2=12$. In this case, $a_1=10 - \frac{12}{2} + n\frac{2-2}{2} = 4,$ so $a_1^2 + a_2^2 = 16 + 144 = 160,$ which again contradicts the equality $\sum a_i^2 = 154$.
		
		The only remaining case is $a_2=10$. This means $x_2 = \frac{a_2+b_2\sqrt{n}}{2} = 5 + \sqrt{n}$, and we get $x_1 = \frac{5+\sqrt{n}}{2}$. Thus $(x_1X+y_1Y)^2$ and $(x_2X+y_2Y)^2$ are exactly the second and third term in the original definition of $\phi$, and we already explained that the first term requires at least five more squares.
	\end{proof}
	\begin{proof}[Proof of Theorem \ref{th:quadratic}]
		The upper bound $G_{\O_F}(2) \leq 7$ is in Lemma \ref{le:23mod4} (1). Part (2) of the same Lemma gives the lower bound $G_{\O_F}(2) \ge 7$ for $n \not\equiv 1 \pmod4$, $n \ge 10$. For $n=6, 7, 13$, the lower bound is contained in Lemma \ref{le:biquadratic}, and for the remaining $n \equiv 1 \pmod4$ in Proposition \ref{pr:1mod4}.
	\end{proof}
	
	\section*{Acknowledgements}
	
	We thank the anonymous reviewer for his or her valuable suggestions.
	
	\bibliographystyle{amsplain}
	
\end{document}